\newtheorem{theorem}{Theorem}[section]
\newtheorem{lemma}[theorem]{Lemma}
\theoremstyle{plain}
\newtheorem{definition}[theorem]{Definition}
\newtheorem{question}[theorem]{Question}
\theoremstyle{definition}
\theoremstyle{remark}
\numberwithin{equation}{section}
\begin{document}

\title{Eliminate obstructions: curves on a 3-fold}

\author{Sen Yang}
\address{Yau Mathematical Sciences Center, Tsinghua University
\\
Beijing, China}
\email{syang@math.tsinghua.edu.cn; senyangmath@gmail.com}

\subjclass[2010]{14C25}
\date{}

\maketitle

\begin{abstract}
By using K-theory, we reinterpret and generalize an idea on eliminating obstructions to deforming cycles, which is known to Mark Green and Phillip Griffiths \cite{GGtangentspace} and TingFai Ng \cite{Ng}(for the divisor case). 

As an application, we show  how to eliminate obstructions to deforming curves on a 3-fold. This answers affirmatively an open question by TingFai Ng \cite{Ng}.
\end{abstract}

\tableofcontents

\section{Introduction to Ng's question}
\label{Introduction}

Let $X$ be a nonsingular projective variety over a field $k$ of characteristic $0$. For $Y \subset X$ a subvariety of codimension $p$, $Y$ can be considered as an element of the Hilbert scheme $\mathrm{Hilb}(X)$ and the Zariski tangent space $\mathrm{T}_{Y}\mathrm{Hilb}(X)$ can be identified with $H^{0}(N_{Y/X})$, where $N_{Y/X}$ is the normal sheaf.  It is well-known 
that $\mathrm{Hilb}(X)$ may be nonreduced at $Y$, the deformation of $Y$ may be obstructed. 

However, Green-Griffiths predicts that we can eliminate obstructions in their program \cite{GGtangentspace}(page 187-190), by considering $Y$ as a \textbf{cycle}. That is, instead of considering $Y$ as an element of $\mathrm{Hilb}(X)$, considering $Y$ as an element of the cycles class group $Z^{p}(X)$ can eliminate obstructions.
For $p=1$, Green-Griffiths' idea was realized by TingFai Ng in his Ph.D thesis \cite{Ng}:
\begin{theorem}  [Theorem 1.3.3 in \cite{Ng}] \label{theorem: Ng's theorem}
The divisor class group $Z^{1}(X)$ is smooth. %%\footnote{An alternative expression is: every $\tau \in TZ^{1}(X)$ is the tangent to a geometric arc in $Z^{1}(X)$, where $TZ^{1}(X)$ is the tangent space to $Z^{1}(X)$. This is the formulation used in chapter 10 of \cite{GGtangentspace}.}
\end{theorem}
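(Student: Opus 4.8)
The plan is to reinterpret $Z^{1}(X)$ through Bloch's formula $CH^{1}(X)=H^{1}(X,\mathcal{O}^{*}_{X})$ and its Gersten resolution, and then to deduce smoothness of $Z^{1}(X)$ from the classical smoothness of the Picard scheme. For $X$ smooth over $k$ the group $Z^{1}(X)$ is canonically the group $H^{0}(X,\mathcal{K}^{*}_{X}/\mathcal{O}^{*}_{X})$ of Cartier divisors, and the Gersten sequence $0\to\mathcal{O}^{*}_{X}\to\mathcal{K}^{*}_{X}\to\mathcal{K}^{*}_{X}/\mathcal{O}^{*}_{X}\to 0$ (a resolution of $\mathcal{K}_{1}=\mathcal{O}^{*}_{X}$) exhibits the class map $Z^{1}(X)\to CH^{1}(X)=\mathrm{Pic}(X)$ as the associated connecting homomorphism. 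First I would make the Green--Griffiths infinitesimal structure precise in these terms: for an Artinian local $k$-algebra $R$ with residue field $k$ set $Z^{1}(X)(R):=H^{0}(X_{R},\mathcal{K}^{*}_{X_{R}}/\mathcal{O}^{*}_{X_{R}})$, so that a deformation of a cycle $Y$ over $R$ is the same datum as a deformation of the pair $(\mathcal{O}_{X}(Y),s_{Y})$ consisting of the line bundle $\mathcal{O}_{X}(Y)$ together with its tautological meromorphic section. Then ``$Z^{1}(X)$ is smooth'' means: for every small surjection $R'\twoheadrightarrow R$ with kernel $J$, every $R$-point of $Z^{1}(X)$ lifts to an $R'$-point.

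Next I would run the obstruction through the Gersten sequence. Because the constant sheaves $\mathcal{K}_{X}$ and $\mathcal{K}^{*}_{X}$ are flasque, the obstruction to lifting a Cartier divisor along $R'\to R$ lies in $H^{1}\bigl(X,(\mathcal{K}_{X}/\mathcal{O}_{X})\otimes_{k}J\bigr)$, and the boundary map of $0\to\mathcal{O}_{X}\to\mathcal{K}_{X}\to\mathcal{K}_{X}/\mathcal{O}_{X}\to 0$ identifies this group with $H^{2}(X,\mathcal{O}_{X})\otimes_{k}J$. Compatibility of the two Gersten sequences (for $\mathcal{K}^{*}$ and for $\mathcal{O}^{*}$) shows that under $Z^{1}(X)\to\mathrm{Pic}(X)$ this obstruction maps to $\pm$ the standard obstruction to deforming the line bundle $\mathcal{O}_{X}(Y)$ in $H^{2}(X,\mathcal{O}_{X})\otimes_{k}J$, and that the two copies of $H^{2}$ are identified by the displayed boundary map. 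The decisive input is then that the line-bundle obstruction vanishes: $\mathrm{Pic}_{X/k}$ is a group scheme locally of finite type over $k$ (Grothendieck, $X$ projective), and a group scheme locally of finite type over a field of characteristic zero is smooth by Cartier's theorem --- this is exactly where $\mathrm{char}\,k=0$ enters. Hence the obstruction to deforming $\mathcal{O}_{X}(Y)$, and therefore the obstruction to deforming $Y\in Z^{1}(X)$, is zero, so $Z^{1}(X)$ is smooth.

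Equivalently, and more hands-on: having lifted $\mathcal{O}_{X}(Y)$ by smoothness of $\mathrm{Pic}$, the meromorphic section $s_{Y}$ lifts for free, since meromorphic sections form a torsor under the flasque sheaf $\mathcal{K}^{*}_{X}$ and so carry no $H^{1}$-obstruction. This is the precise sense in which replacing $\mathrm{Hilb}(X)$ by $Z^{1}(X)$ ``eliminates obstructions'': the Hilbert-scheme obstruction lies in $H^{1}(N_{Y/X})$, which via $0\to\mathcal{O}_{X}\to\mathcal{O}_{X}(Y)\to N_{Y/X}\to 0$ splits into an $H^{1}(X,\mathcal{O}_{X}(Y))$-part --- absorbed in the cycle picture by allowing the section to acquire poles, or concretely by deforming $(Y+A)-A$ with $A$ very ample and $H^{1}(\mathcal{O}_{X}(Y+A))=H^{1}(\mathcal{O}_{X}(A))=0$ (Serre vanishing) --- and an $H^{2}(X,\mathcal{O}_{X})$-part, killed by Cartier's theorem.

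The main obstacle I anticipate is the bookkeeping rather than any single cohomology computation: endowing $Z^{1}(X)$ with an honest formal structure matching the Green--Griffiths framework, and checking that the obstruction cocycle assembled from the Gersten resolution genuinely agrees (with the correct sign) with the Picard obstruction --- including careful treatment of the non-reduced schemes $X_{R}$, which are not regular, so that ``Cartier divisor'' versus ``Weil divisor'' must be kept apart, and of the residual flatness and relative-effectivity conditions ensuring that the lifted pair $(\mathcal{O}_{X}(Y'),s_{Y'})$ is a bona fide relative cycle. Once this is arranged, the K-theoretic shape of the argument --- $Z^{1}(X)$ as an $H^{0}$ attached to the Gersten resolution of $\mathcal{K}_{1}$, with obstruction controlled by the next cohomology group and annihilated by smoothness of $\mathrm{Pic}=H^{1}(X,\mathcal{O}^{*}_{X})$ --- is exactly the form that should generalize to curves on a $3$-fold, with $\mathcal{K}_{1}$ replaced by $\mathcal{K}_{2}$.
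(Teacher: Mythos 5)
Your argument is correct in outline and, modulo the bookkeeping you yourself flag, gives a complete proof; but it takes a genuinely different route from the one the paper sketches (which is Ng's). The paper's proof stays inside the Hilbert scheme: choose an ample $W$ with $H^{1}(\mathcal{O}_{X}(Y+W))=0$ (Serre vanishing), observe that the semi-regularity map $\pi\colon H^{1}(Y\cup W, N_{Y\cup W/X})\to H^{2}(\mathcal{O}_{X})$ is then injective, invoke Kodaira--Spencer/Bloch to conclude that $\mathrm{Hilb}(X)$ is smooth at $Y\cup W$, and transport this to the cycle via $Y=(Y+W)-W$ by deforming $Y\cup W$ freely while holding $W$ fixed. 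You instead deform the cycle directly as a Cartier divisor, i.e.\ as a pair consisting of a line bundle and a meromorphic section: the section lifts for free because $\mathcal{K}^{*}_{X}$ is flasque, and the line bundle lifts because $\mathrm{Pic}_{X/k}$ is smooth in characteristic zero (Cartier's theorem), the connecting-map compatibility identifying the two obstructions in $H^{2}(\mathcal{O}_{X})\otimes_{k}J$. The two arguments share the same engine --- smoothness of $\mathrm{Pic}$ in characteristic $0$ is also what ultimately powers the Kodaira--Spencer semi-regularity theorem --- but yours dispenses with the auxiliary divisor and with $\mathrm{Hilb}(X)$ altogether, which is in fact closer to the paper's stated goal of deforming cycles directly rather than through subschemes. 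What Ng's route buys, and the reason this paper follows it, is that the decomposition $Y=(Y+W)-W$ with $W$ frozen is exactly the pattern (``Idea $(\ast)$'') that generalizes to codimension $2$, where there is no analogue of ``$\mathrm{Pic}$ is smooth'': your closing suggestion that one can simply replace $\mathcal{K}_{1}$ by $\mathcal{K}_{2}$ runs into precisely the difficulty (controlling the analogous obstruction group, and the non-surjectivity of $\mu$) that the paper's Sections 2--3 are built to circumvent.
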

Ng proved that for $Y \in Z^{1}(X)$, we can lift $Y$(as a cycle) to higher order successively. Ng's method is to use the {\it semi-regularity} map,  we sketch it briefly as follows and refer to \cite{Ng} for details.

 Let $X$ be a nonsingular projective variety over a field $k$ of characteristic $0$. For $Y \subset X$ a locally complete intersection of codimension $p$, Bloch constructs the {\it semi-regularity} map in \cite{Bloch}, 
\[
 \pi: H^{1}(Y, N_{Y/X}) \to H^{p+1}(X, \Omega^{p-1}_{X/k}).
\]
In particular, for $p=1$, this map $\pi: H^{1}(Y, N_{Y/X}) \to H^{2}(O_{X})$(due to Kodaira-Spencer \cite{KS}) agrees with the boundary map in the long exact sequence 
\[
 \cdots \to  H^{1}(O_{X}(Y))  \to H^{1}(Y, N_{Y/X}) \to H^{2}(O_{X})  \to \cdots,
\]
associated to the short exact sequence:
\[
 0 \to O_{X} \to O_{X}(Y) \to N_{Y/X} \to 0.
\]

Suppose $Y'$ is a first order infinitesimal deformation of $Y$, which is obstructed to second order, Ng uses the following method to eliminate obstructions.
Let $W$ be an ample divisor such that $H^{1}(O_{X}(Y+W))=0$. Since the subscheme $Y \cup W$ is still a locally complete intersection, we have the {\it semi-regularity} map $\pi: H^{1}(Y \cup W, N_{Y \cup W /X}) \to H^{2}(O_{X})$, which agrees with the boundary map in the long exact sequence 
\[
 \cdots \to  H^{1}(O_{X}(Y+W))  \to H^{1}(Y \cup W, N_{Y \cup W/X}) \xrightarrow{\pi} H^{2}(O_{X})  \to \cdots.
\]

Since $H^{1}(O_{X}(Y+W))=0$, the kernel of $\pi$ is $0$, so $Y \cup W$ is {\it semi-regular} in $X$. According to Kodaira-Spencer \cite{KS}, see Theorem 1.2 of \cite{Bloch}, the Hilbert scheme $\mathrm{Hilb(X)}$ is smooth at the point corresponding to $Y \cup W$. Let $(Y \cup W)^{'}$ be the first order infinitesimal deformation of $Y \cup W$ satisfying $(Y \cup W)^{'}\mid_{Y}=Y'$, then $(Y \cup W)^{'}$ can be deformed to second order.

As an algebraic cycle, $Y$ can be written as a formal sum
\[
  Y = (Y+W) - W \in Z^{1}(X).
\]
To deform $Y$ is equivalent to deforming $(Y+W)$ and $W$ respectively. However, the classical definition of algebraic cycles can't distinguish nilpotent, we can't deform algebraic cycles $(Y+W)$ or $W$ directly. Since $(Y+W)$ and $W$ are algebraic cycles associated to the subschemes $Y \cup W$ and $W$ respectively, we deform the subschemes $Y \cup W$ and $W$ to $(Y \cup W)'$ and $W'$,  and consider them as deformations of  algebraic cycles $(Y+W)$ and $W$.

To avoid bringing new obstructions, we fix $W$, that is, we take $W^{'}=W \subset X \subset X[\varepsilon]/(\varepsilon^2)$ as a first order infinitesimal deformation of $W$, and $W^{'}$ can be deformed to second order  $W^{''}=W \subset X \subset X[\varepsilon]/(\varepsilon^3)$. 

We consider $(Y \cup W)'-W$ as a first order deformation of $Y$, it satisfies 
\[
((Y \cup W)'-W)\mid_{Y}=Y'.
\]
Moreover, $(Y \cup W)'-W$  deforms to second order.

Ng's method suggests that the following interesting idea,

\textbf{Idea}($\ast$): When the deformation of $Y$ is obstructed, find $W$ such that
\begin{itemize}
\item 1. $W$ helps to eliminate obstructions,  \\
\item 2. $W$ doesn't bring new obstructions.
\end{itemize}

\textbf{Remark:} The readers can see the above argument is not very satisfactory, since we use $(Y \cup W)'$ as the deformation of  the algebraic cycle $Y+W$.
It is better if we can deform algebraic cycles directly, which is one of the topics in the program by Green-Griffiths \cite{GGtangentspace}. In \cite{Y-2,Y-3}, we propose Milnor K-theoretic cycles which can detect nilpotent and use them to deform cycles. We will use Milnor K-theoretic cycles to reinterpret and extend Ng's idea in this note.

In \cite{Ng}, TingFai Ng asks whether we can extend the above idea beyond divisor case, e.g. , curves on a 3-fold.
%% This idea can be explained intuitively in local picture. We assume the point of intersection with coordinates $x$ and $y$, where $Y=\{y=0 \}$ and $v = \partial/\partial y$. Then the first order deformation is locally the divisor of 
%% \[
%%  f_{1}(x,y,\varepsilon)= y-\varepsilon.
%% \]
%% Writing the obstruction as a coboundary with a first order pole at the origin, we find the second order deformation is given by taking the divisor of 
%% \[
%% f_{2}(x,y,\varepsilon)= y-\varepsilon-\dfrac{\varepsilon^{2}}{x}= \dfrac{1}{x}(xy-x\varepsilon-\varepsilon^{2}).
%% \]
%% Thus, $Y+D$ deforms into $xy-x\varepsilon-\varepsilon^{2}=0$.
Suppose $C^{'} \subset X[\varepsilon]/(\varepsilon^2)$ is a first order infinitesimal deformation of a curve $C$ on a 3-fold $X$, while the infinitesimal deformation of $C^{'}$ to $X[\varepsilon]/(\varepsilon^{3})$ may be obstructed(as a subscheme), Ng asks whether a deformation of $C^{'}$ as a \textbf{cycle}\footnote{The phrase ``a deformation of $C^{'}$ as a cycle" is unclear, we will give a definition in Definition \ref{definition: deformation}} always exists:
\begin{question} [ Section 1.5 of  \cite{Ng}] \label{question: OrigNg}
Given a smooth closed curve $C$ in a 3-fold $X$ and a normal vector field $v$, we wish  to know whether it is always possible to find a nodal curve $\tilde{C}$ in X, of which $C$ is a component, (i.e. $\tilde{C} = C \cup D$ for some residue curve $D$) and a normal vector field $\tilde{v}$ on $\tilde{C}$ such that

(1).  $\tilde{v}|_{C}= v$ ,

(2). the first order deformation given by ($\tilde{C}$, $\tilde{v}$) extends to second order,  and 

(3). the first order deformation given by ($D$, $v'$)extends to second order.

Cycle-theoretically, we have  $(C, v)$ = ($\tilde{C}$, $\tilde{v}$)-($D$, $v'$). So we are asking whether $(C, v)$ as a first order deformation of cycles always extends to second order.

\end{question}

Ng doesn't specify what $v'$ should be, one might guess $v'= \tilde{v}|_{D}$. It turns out that this is not the case later, in fact, we should take $v'=0$(meaning we fix $D$, so it doesn't bring new obstructions). Moreover, if $(C, v)$ is obstructed, we can see that $(C, v)$ $\neq$ ($\tilde{C}$, $\tilde{v}$)-($D$, $v'$), otherwise, because of $(2)$ and $(3)$ in Question \ref{question: OrigNg}, there is no obstructions to extending $(C, v)$ to second order.  What we can expect is to use ($\tilde{C}$, $\tilde{v}$)-($D$, $v'$), which is another first order deformation of $C$,  to replace $(C, v)$ and extend ($\tilde{C}$, $\tilde{v}$)-($D$, $v'$) to second order. So we modify Ng's Question \ref{question: OrigNg} as follows:
\begin{question} [ Section 1.5 of  \cite{Ng}] \label{question: Ng}
Given a smooth \footnote{We will remove this hypothesis later in Question ~\ref{question: NgRewrite}.} closed curve $C$ in a 3-fold $X$ and a normal vector field $v$, we wish  to know whether it is always possible to find a nodal curve $\tilde{C}$ in X, of which $C$ is a component, (i.e. $\tilde{C} = C \cup D$ for some residue curve $D$) and a normal vector field $\tilde{v}$ on $\tilde{C}$ such that

(1).  $\tilde{v}|_{C}= v$ ,

(2). the first order deformation given by ($\tilde{C}$, $\tilde{v}$) extends to second order,  and 

(3). the first order deformation given by ($D$, $v'$)extends to second order.

\textbf{Cycle-theoretically}, ($\tilde{C}$, $\tilde{v}$)-($D$, $v'$) is a first order deformation of $C$ and we can extend it to second order.

\end{question}

The key to answer Ng's question is to interpret it, especially the word \textbf{Cycle-theoretically}, in an appropriate way. For this purpose, we reformulate Ng's question in the framework of \cite{Y-3, Y-4} and answer it affirmatively in Theorem \ref{theorem: answerToNg}. 

For $X$ a $d$-dimensional smooth projective variety and $Y \subset X$ a subvariety of codimension $p$($1\leqslant p \leqslant d$), considering $Y$ as an element of the cycle class group $Z^{p}(X)$, 
Mark Green and Phillip Griffiths \cite{GGtangentspace}(page 187-190) conjecture that we can deform $Y$(as a cycle) to higher order successively. This has been reformulated and has been answered affirmatively in \cite{Y-3}(Section 3). 

We remark that Ng's Question \ref{question: Ng}
above and its reformulation Question \ref{question: NgRewrite} below, are slightly different from Green-Griffiths' question on obstruction issues reformulated in \cite{Y-3}. That's mainly because we don't know  whether the map $\mu$ in Definition \ref{definition: map1} is surjective or not\footnote{The author learned this subtlety from discussion \cite{Bloch1} with Spencer Bloch}.

\vspace{5mm}

\textbf{Set-up} \  Throughout this note, we consider the following set-up:

$X$ is a nonsingular projective 3-fold over a field $k$ of characteristic $0$, let $Y \subset X$ be a curve with generic point $y$. 
For a point $x \in Y \subset X$, the local ring $O_{X,x}$ is a regular local ring of dimension 3 and the maximal ideal $m_{X,x}$ is generated by a regular sequence $f ,g, h$. We assume $Y$ is generically defined by $(f,g)$,
so the local ring $O_{X,y}=(O_{X,x})_{(f,g)}$.

\textbf{Notations:}

(1). K-theory used in this note will be Thomason-Trobaugh non-connective K-theory, if not stated otherwise. 

(2). For any abelian group $M$, $M_{\mathbb{Q}}$ denotes the image of $M$ in $M \otimes_{\mathbb{Z}} \mathbb{Q}$. 

(3). $(a,b)^{\mathrm{T}}$ denotes the transpose of $(a,b)$.
\section{Reformulate Ng's question}
\label{Ng's question and reformulation}
For $X$ is a nonsingular projective 3-fold over a field $k$ of characteristic $0$,
for each non-negative integre $j$, let $X_{j}$ denote the $j$-th infinitesimally trivial deformation of $X$, i.e., $X_{j}= X \times_{k} \mathrm{Spec}(k[\varepsilon]/ \varepsilon^{j+1})$. In particular, $X_{0}=X$, $X_{1}=X[\varepsilon]/ (\varepsilon^{2})$, and $X_{2}=X[\varepsilon]/ (\varepsilon^{3})$.

Let $Y^{'} \subset X_{1}$ be a first order infinitesimal deformation of $Y$, that is, $Y^{'}$ is flat over $\mathrm{Spec}(k[\varepsilon]/(\varepsilon^{2}))$ and $Y^{'} \otimes_{\mathrm{Spec}(k[\varepsilon]/(\varepsilon^{2}))} \mathrm{Spec}(k) \cong Y$.  $Y'$ is generically given by $(f+\varepsilon f_{1}, g+\varepsilon g_{1})$, where $f_{1}$, $g_{1} \in O_{X,y}$, see \cite{Y-4} for related discussions if necessary. For simplicity, we assume $g_{1}=0$ in the following.

We use $F_{\bullet}(f+\varepsilon f_{1}, g)$ to denote the Koszul complex associated to the regular sequence $f+\varepsilon f_{1}, g$,  which is a resolution of $O_{X_{1}, y}/(f+\varepsilon f_{1}, g)$:
\[
0 \to O_{X_{1},y} \xrightarrow{(g, -f-\varepsilon f_{1})^{\mathrm{T}}} O_{X_{1},y}^{\oplus 2} \xrightarrow{(f+\varepsilon f_{1}, g)} O_{X_{1},y}.
\]

Recall that Milnor K-group with support is rationally defined as certain eigenspaces of K-groups in \cite{Y-2},
\begin{definition}  [Definition 3.2 in \cite{Y-2}] \label{definition:Milnor K-theory with support}
Let $X$ be a finite equi-dimensional noetherian scheme and $x \in X^{(p)}$. For $m \in \mathbb{Z}$, Milnor K-group with support $K_{m}^{M}(O_{X,x} \ \mathrm{on} \ x)$ is rationally defined to be 
\[
  K_{m}^{M}(O_{X,x} \ \mathrm{on} \ x) := K_{m}^{(m+p)}(O_{X,x} \ \mathrm{on} \ x)_{\mathbb{Q}},
\] 
where $K_{m}^{(m+p)}$ is the eigenspace of $\psi^{k}=k^{m+p}$ and $\psi^{k}$ is the Adams operations.
\end{definition}

In our setting, $X$ is a nonsingular projective three-fold over a field $k$ of characteristic 0, $y \in X^{(2)}$,  for each non-negative integer $j$,
\[
 K^{M}_{0}(O_{X_{j},y} \ \mathrm{on} \ y): = K^{(2)}_{0}(O_{X_{j},y} \ \mathrm{on} \ y)_{\mathbb{Q}} \subseteq K_{0}(O_{X_{j},y} \ \mathrm{on} \ y)_{\mathbb{Q}}.
\]

\begin{lemma} \label{lemma: OmitMilnor}
In the notation above,
\[
K^{M}_{0}(O_{X_{j},y} \ \mathrm{on} \ y) = K_{0}(O_{X_{j},y} \ \mathrm{on} \ y)_{\mathbb{Q}}.
\]
\end{lemma}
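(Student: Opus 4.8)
The plan is to show that the eigenspace decomposition of $K_0(O_{X_j,y}\text{ on }y)_{\mathbb{Q}}$ under the Adams operations is concentrated in the single weight $(2)$, so that no rational information is lost by passing to Milnor K-theory. The point $y$ has codimension $p=2$ in $X_j$, and $O_{X_j,y}$ is a local ring whose residue field is the function field $k(Y)$ and which is a (non-reduced, when $j\geq 1$) local ring of Krull dimension... well, the relevant fact is that $K_0(O_{X_j,y}\text{ on }y)$ is the $K$-theory with supports in the closed point. I would first invoke dévissage / localization for $K$-theory with supports: since $O_{X_j,y}$ is a regular local ring in the case $j=0$ (it equals $(O_{X,x})_{(f,g)}$), resp. a local ring with a nilpotent thickening over a regular one for $j\geq 1$, the group $K_0(O_{X_j,y}\text{ on }y)$ is generated by classes of the form $[O_{X_j,y}/(f,g)]$-type cycles, i.e. by Koszul-resolved modules of finite length over the local ring, and these have a canonical $K$-theoretic description via the Koszul complex $F_\bullet(f+\varepsilon f_1, g)$ displayed just above.

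The key step is to identify the Adams-operation weights that actually occur. For a regular local ring $R$ of dimension $d$ with closed point $x$, Soulé's computation (and the Gersten–Quillen machinery) shows $K_n(R\text{ on }x)_{\mathbb{Q}}$ is pure of weight $n+d$; in particular $K_0(R\text{ on }x)_{\mathbb{Q}} = K_0^{(d)}(R\text{ on }x)_{\mathbb{Q}}$ with no other eigenspaces. Here $d = 2$ (the codimension of $y$), matching the definition of $K_0^M$. So in the case $j=0$ the lemma is exactly this purity statement. For $j \geq 1$, I would reduce to the reduced case: the closed immersion $\mathrm{Spec}(O_{X,y}) \hookrightarrow \mathrm{Spec}(O_{X_j,y})$ is a nilpotent thickening, so by the nilinvariance of $K$-theory with supports up to the relevant filtration — more precisely, using that $K_*(O_{X_j,y}\text{ on }y) \to K_*(O_{X,y}\text{ on }y)$ has nilpotent kernel on each graded piece, or better, using the Koszul-complex generators directly — the weight decomposition of $K_0(O_{X_j,y}\text{ on }y)_{\mathbb{Q}}$ still collapses onto weight $2$. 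Concretely: every class with support in $y$ is represented by a perfect complex of the shape $F_\bullet(f+\varepsilon f_1, g)$ (a length-two Koszul complex), and the $\lambda$-operations act on such a class the same way they act on the corresponding $j=0$ Koszul class modulo terms that do not change the associated graded weight, since $\varepsilon$ is nilpotent and contributes only to the "decorated" part that lies in weight $\geq 2$ anyway.

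The main obstacle I anticipate is making the $j \geq 1$ case rigorous without circularity: one must be careful that the Adams operations $\psi^k$ on $K_0(O_{X_j,y}\text{ on }y)$ are compatible with the nilpotent filtration and that the higher-weight eigenspaces genuinely vanish rather than merely being controlled. The cleanest route is probably to combine the localization sequence relating $K(O_{X_j,y}\text{ on }y)$ to the $K$-theory of $O_{X_j,y}$ and its fraction ring, with the $\gamma$-filtration computation on a local ring of dimension $\leq 2$, using that the codimension of the support forces vanishing of weights below $2$ (no subvarieties of higher codimension exist) and finiteness of the module forces vanishing of weights above $2$ (the cycle-theoretic bound, since $O_{X_j,y}$ has no residue-field contribution of weight $> 2$ because $k(Y)$ contributes only Milnor symbols which land in the correct weight). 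Assembling these two bounds pins the whole rational group in weight $2$, which is the assertion.
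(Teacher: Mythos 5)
Your treatment of the $j=0$ case is essentially the paper's: purity of the Adams weight for $K_0$ with supports in a codimension-two point of a regular local ring, via Soul\'e's Riemann--Roch without denominators, which identifies $K^{(i)}_{0}(O_{X,y}\ \mathrm{on}\ y)_{\mathbb{Q}}$ with $K^{(i-2)}_{0}(k(y))_{\mathbb{Q}}$ and so kills every weight except $i=2$. That part is fine.

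The $j\geq 1$ case is where your proposal has a genuine gap. You appeal to a ``nilinvariance of $K$-theory with supports'' and to the kernel of $K_{*}(O_{X_{j},y}\ \mathrm{on}\ y)\to K_{*}(O_{X,y}\ \mathrm{on}\ y)$ being ``nilpotent on each graded piece,'' but $K$-theory is emphatically not nilinvariant in characteristic $0$: that kernel (the relative group $K^{(i)}_{0}(O_{X_{j},y}\ \mathrm{on}\ y,\varepsilon)_{\mathbb{Q}}$) is a large $\mathbb{Q}$-vector space, and the entire content of the lemma for $j\geq 1$ is to show that this large relative piece is nevertheless concentrated in weight $i=2$. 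Your heuristic that the $\varepsilon$-decorated part ``lies in weight $\geq 2$ anyway'' does not rule out weights $i=3,4,\dots$, and neither the codimension bound nor the ``cycle-theoretic'' finite-length bound you invoke applies to the relative part, which is differential-form-theoretic rather than cycle-theoretic. The missing ingredient is the identification (Corollary 3.11 of \cite{Y-2}, or Corollary 9.5 of \cite{DHY}, ultimately resting on Goodwillie's theorem and the Hochschild--Kostant--Rosenberg weight bookkeeping) of the relative group with local cohomology of absolute K\"ahler differentials, $K^{(i)}_{0}(O_{X_{j},y}\ \mathrm{on}\ y,\varepsilon)_{\mathbb{Q}}\cong H^{2}_{y}(\Omega^{\bullet,(i)}_{X/\mathbb{Q}})$ with $\Omega^{\bullet,(i)}_{X/\mathbb{Q}}=(\Omega^{2i-3}_{X/\mathbb{Q}})^{\oplus j}$ for $1<i\leq 2$ and $0$ otherwise; it is this explicit computation, not any softness of the nilpotent thickening, that forces the vanishing outside $i=2$. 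Once that is in hand, the splitting $K_{0}(O_{X_{j},y}\ \mathrm{on}\ y)_{\mathbb{Q}}=K_{0}(O_{X,y}\ \mathrm{on}\ y)_{\mathbb{Q}}\oplus K_{0}(O_{X_{j},y}\ \mathrm{on}\ y,\varepsilon)_{\mathbb{Q}}$ finishes the argument as in the paper.
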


\begin{proof}
For $j=0$, according to Riemann-Roch without denominator \cite{Soule}, 
\[
K^{(i)}_{0}(O_{X,y} \ \mathrm{on} \ y)_{\mathbb{Q}} \cong K^{(i-2)}_{0}(k(y))_{\mathbb{Q}},
\]
where $k(y)$ is the residue field. This forces $K^{(i)}_{0}(O_{X,y}\ \mathrm{on} \ y)_{\mathbb{Q}}=0$, except for $i=2$. So we have
\[
K^{M}_{0}(O_{X,y} \ \mathrm{on} \ y) = K_{0}(O_{X,y} \ \mathrm{on} \ y)_{\mathbb{Q}}.
\]

For each positive integer $j$, let $K^{(i)}_{0}(O_{X_{j},y} \ \mathrm{on} \ y, \varepsilon)_{\mathbb{Q}}$ denote the relative K-group, that is , the kernel of the natural projection
\[
  K^{(i)}_{0}(O_{X_{j},y} \ \mathrm{on} \ y)_{\mathbb{Q}}  \xrightarrow{\varepsilon=0} 
  K^{(i)}_{0}(O_{X,y} \ \mathrm{on} \ y)_{\mathbb{Q}}.
  \] 
We have proved that, see Corollary 3.11 in \cite{Y-2} or Corollary 9.5 in \cite{DHY}, the relative K-group is isomorphic to local cohomology:
\[
K^{(i)}_{0}(O_{X_{j},y} \ \mathrm{on} \ y, \varepsilon)_{\mathbb{Q}} \cong H_{y}^{2}(\Omega^{\bullet,(i)}_{X/\mathbb{Q}}),
 \]
where
\begin{equation}
\begin{cases}
 \Omega_{X/ \mathbb{Q}}^{\bullet,(i)} & = (\Omega^{{2i-3}}_{X/ \mathbb{Q}})^{\oplus j}, \mathrm{for} \   1  < \ i \leq 2;\\
  \Omega_{X/ \mathbb{Q}}^{\bullet,(i)} & = 0, \mathrm{else}.
\end{cases}
\end{equation} 
This says that $K^{(i)}_{0}(O_{X_{j},y} \ \mathrm{on} \ y, \varepsilon)_{\mathbb{Q}}=0$, except for $i=2$.

Since $K_{0}(O_{X_{j},y} \ \mathrm{on} \ y)_{\mathbb{Q}}=K_{0}(O_{X,y} \ \mathrm{on} \ y)_{\mathbb{Q}} \oplus  K_{0}(O_{X_{j},y} \ \mathrm{on} \ y, \varepsilon)_{\mathbb{Q}}$, one sees that $K^{(i)}_{0}(O_{X_{j},y} \ \mathrm{on} \ y)_{\mathbb{Q}}=0$, except for $i=2$. That is, 
\[
 K^{M}_{0}(O_{X_{j},y} \ \mathrm{on} \ y) = K_{0}(O_{X_{j},y}  \ \mathrm{on} \ y)_{\mathbb{Q}}.
\]
\end{proof}

We identify  the Zariski tangent space $\mathrm{T}_{Y}\mathrm{Hilb}(X)$ 
with $H^{0}(Y, N_{Y/X})$ and recall the following,
\begin{definition}  [Definition 2.4 in \cite{Y-4}] \label{definition: map1}
We define a  map $\mu:  \ H^{0}(Y, N_{Y/X}) \to K_{0}(O_{X_{1},y} \ \mathrm{on} \ y)_{\mathbb{Q}}$ as follows: 
\begin{align*}
\mu:   \ H^{0}(Y, \  & N_{Y/X}) \to K_{0}(O_{X_{1},y}  \ \mathrm{on} \ y)_{\mathbb{Q}} \\
& Y' \longrightarrow  F_{\bullet}(f+\varepsilon f_{1}, g),
\end{align*}
where $F_{\bullet}(f+\varepsilon f_{1}, g)$ is the Koszul complex associated 
to $f+\varepsilon f_{1}, g$.
\end{definition}

Now, we recall  Milnor K-theoretic cycles:
\begin{definition}[Definition 3.4 and 3.15 in \cite{Y-2}] \label{definition: Milnor K-theoretic Chow groups}
Let $X$ be a nonsingular projective 3-fold over a field $k$ of characteristic $0$, the second Milnor K-theoretic cycles on $X$ is defined to be
\[
Z^{M}_{2}(D^{\mathrm{Perf}}(X)) := \bigoplus\limits_{y \in X^{(2)}} K_{0}(O_{X,y}  \ \mathrm{on} \ y)_{\mathbb{Q}}.
\]

For each positive integer $j$, $X_{j}$ denote the $j$-th infinitesimally trivial deformation of $X$, the second Milnor K-theoretic cycles on $X_{j}$ are defined to be:
\[
Z^{M}_{2}(D^{\mathrm{Perf}}(X_{j})) := \mathrm{Ker}(d_{1,X_{j}}^{2,-2}), 
\]
where $d_{1,X_{j}}^{2,-2}$ are the differentials in Theorem \ ~\ref{theorem: firstorder}.
\end{definition}

For each positive integer $j$, the natural map $f_{j}: X_{j-1} \to X_{j}$ 
induces the following commutative diagram, see Section 3.1 of  \cite{Y-3}(page 33),
\[
\begin{CD}
\bigoplus\limits_{y \in X^{(2)}}K_{0}(O_{X_{j}, y} \ \mathrm{on} \ y)_{\mathbb{Q}}    @>f_{j}^{*}>>
\bigoplus\limits_{y \in X^{(2)}}K_{0}(O_{X_{j-1},y} \ \mathrm{on} \ y)_{\mathbb{Q}}  \\ 
@Vd_{1,X_{j}}^{2,-2}VV  @Vd_{1,X_{j-1}}^{2,-2}VV \\ 
 \bigoplus\limits_{x \in X^{(3)}} K_{-1}(O_{X_{j},x} \ \mathrm{on} \ x)_{\mathbb{Q}}   @>f_{j}^{*}>>
\bigoplus\limits_{x \in X^{(3)}}K_{-1}(O_{X_{j-1},x} \ \mathrm{on} \ x)_{\mathbb{Q}},
\end{CD}
\]
so it further induces $f^{\ast}_{j}:  Z^{M}_{2}(D^{\mathrm{perf}}(X_{j})) \to  Z^{M}_{2}(D^{\mathrm{perf}}(X_{j-1}))$.

\begin{definition}  [Definition 3.3 \cite{Y-3}] \label{definition: deformation}
For each positive integer $j$, given $\xi_{j-1} \in Z^{M}_{2}(D^{\mathrm{perf}}(X_{j-1}))$,  an element $\xi_{j} \in Z^{M}_{2}(D^{\mathrm{perf}}(X_{j}))$ is called a deformation of $\xi_{j-1}$,  if $f^{\ast}_{j}(\xi_{j}) = \xi_{j-1}$.
\end{definition}

$\xi_{j-1}$ and $\xi_{j}$ can be formally written as finite sums
\[
 \sum_{y}\lambda_{j-1}\cdot \overline{\{y \}} \ \mathrm{and} \   \sum_{y}\lambda_{j}\cdot \overline{\{ y \}},
\] 
where $\lambda_{j}$'s are perfect complexes such that
$\sum\limits_{y}\lambda_{j} \in \mathrm{Ker}(d_{1,X_{j}}^{2,-2}) \subset \bigoplus\limits_{y \in X^{(2)}}K_{0}(O_{X_{j}, y} \ \mathrm{on} \ y)_{\mathbb{Q}}$.

 When we deform from  $\xi_{j-1}$ to $\xi_{j}$, we  deform the \textbf{coefficient} from $\sum\limits_{y}\lambda_{j}$ to $\sum\limits_{y}\lambda_{j+1}$, in other words, we deform the perfect complexes.

Now, we are ready to rewrite Ng's Question \ref{question: Ng} as follows:
\begin{question} \cite{Ng}  \label{question: NgRewrite}
In the notation of Set-up(page 4), for a first order infinitesimal deformation $Y'$ of $Y$, which is generically given by $(f+\varepsilon f_{1}, g)$,
let $\mu(Y') =F_{\bullet}(f+\varepsilon f_{1}, g) \in K_{0}(O_{X_{1},y} \ \mathrm{on} \ y)_{\mathbb{Q}}$ denote the image of $Y'$ under the map $\mu$ in Definition \ ~\ref{definition: map1}.  

Is it always possible to find an element $\gamma = \mu(Y) + \mu(Z) \footnote{To define $\mu(Y)$, we take $f_{1} = g_{1}=0$ in Definition ~\ref{definition: map1}. $\mu(Z)$ can be defined similarly.} \in Z^{M}_{2}(D^{\mathrm{Perf}}(X)) $, for some curve $Z \subset X$  and a first order  deformation $\gamma'$ of $\gamma$, in the sense of  Definition \ref{definition: deformation} such that 

1. $\gamma' = \mu(Y') + \mu(Z') \in Z^{M}_{2}(D^{\mathrm{Perf}}(X_{1}))$, with $Z'$ a first order infinitesimal deformation of $Z$. So it is obvious that $\gamma' \mid_{Y} = \mu(Y')$. 

(2). the first order deformation $\gamma'$ deforms to second order $\gamma'' \in Z^{M}_{2}(D^{\mathrm{Perf}}(X_{2}))$.

(3). Considering $\mu(Z)$ as a first order deformation of itself, we can 
deform it to second order $\mu(Z)$(meaning we fix $Z$).

Instead of considering $\mu(Y')$\footnote{In general, $\mu(Y')$ is not necessary to be a deformation of $\mu(Y)$ in the sense of  Definition \ref{definition: deformation}, see Theorem \ref{theorem: answerToNg}.}, $(\mu(Y') + \mu(Z'))$-$\mu(Z)$ is a first order deformation of $\mu(Y)$ and we can deform it to second order.
\end{question}

\begin{definition} \label{definition: definingCurveZ}
In the same notation of Set-up(page 4), let $z$ be the point defined by the prime ideal $(h,g) \subset O_{X,x}$, then $z \in X^{(2)}$. We define a curve $Z \subset X$ to be 
\[
Z := \overline{\{z \}}.
\]
\end{definition}

\section{Answer Ng's question} 
\label{Curves on 3-fold-Ng's question}
Recall that $X$ is a nonsingular projective 3-fold over a field $k$ of characteristic $0$, for each non-negative integre $j$, let $X_{j}$ denote the $j$-th infinitesimally trivial deformation of $X$.
\begin{theorem} \cite{Y-2}  \label{theorem: firstorder}
For $X$ a nonsingular projective 3-fold over a field $k$ of characteristic $0$, by taking $q=2$ in $\mathrm{Theorem \ 3.14}$ of \cite{Y-2}, for each positive integer $j$, we have the following commutative diagram:
 \footnote{The reason why we can use $K_{-1}(O_{X_{j}, x} \ \mathrm{on} \ x)_{\mathbb{Q}}$ to replace $K^{M}_{-1}(O_{X_{j}, x} \ \mathrm{on} \ x)$ is similar as Lemma \ref{lemma: OmitMilnor}. }\\

\[
  \begin{CD}
     \bigoplus\limits_{y \in X^{(2)}} H_{y}^{2}((\Omega_{X/ \mathbb{Q}}^{1})^{\oplus j}) @<\mathrm{Ch}<< \bigoplus\limits_{y \in X^{(2)}}K_{0}(O_{X_{j},y} \ \mathrm{on} \ y)_{\mathbb{Q}}  \\
      @V(\partial_{1}^{2,-2})^{j}VV @Vd_{1,X_{j}}^{2,-2}VV  \\
     \bigoplus\limits_{x \in X^{(3)}} H_{x}^{3}((\Omega_{X/ \mathbb{Q}}^{1})^{\oplus j}) @<\mathrm{Ch}< \cong < \bigoplus\limits_{x \in X^{(3)}}K_{-1}(O_{X_{j},x} \ \mathrm{on} \ x)_{\mathbb{Q}}. \\
  \end{CD}
\]
\end{theorem}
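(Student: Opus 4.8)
The plan is to obtain the diagram by specializing Theorem~3.14 of \cite{Y-2} — applied to the present smooth projective $3$-fold $X$ and its $j$-th trivial thickening $X_{j}$ — to the weight $q=2$, and then to verify the only assertion that is not pure transcription, namely that the lower horizontal Chern character is an isomorphism. Recall that for each fixed weight $q$, Theorem~3.14 of \cite{Y-2} produces a morphism, induced by the Chern character and compatible with the Adams eigenspace decomposition, from a two-term piece of the $E_{1}$-page of the coniveau (Gersten-type) spectral sequence of $X_{j}$,
\[
\bigoplus_{y\in X^{(p)}}K^{(q)}_{q-p}(O_{X_{j},y} \ \mathrm{on} \ y)_{\mathbb{Q}}\ \xrightarrow{\ d_{1,X_{j}}^{p,-q}\ }\ \bigoplus_{x\in X^{(p+1)}}K^{(q)}_{q-p-1}(O_{X_{j},x} \ \mathrm{on} \ x)_{\mathbb{Q}},
\]
to the matching two-term piece of the coniveau complex assembled from local cohomology of the K\"ahler differentials $\Omega^{\bullet}_{X/\mathbb{Q}}$.

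Next I would specialize and identify the four corners. Putting $q=2$ and $p=2$ and using $\dim X=3$, so that only codimensions $2$ and $3$ enter this stretch of the complex, the source becomes $\bigoplus_{y\in X^{(2)}}K^{(2)}_{0}(O_{X_{j},y} \ \mathrm{on} \ y)_{\mathbb{Q}}$ at the generic points $y$ of curves and the target becomes $\bigoplus_{x\in X^{(3)}}K^{(2)}_{-1}(O_{X_{j},x} \ \mathrm{on} \ x)_{\mathbb{Q}}$ at closed points $x$. By Lemma~\ref{lemma: OmitMilnor} the first equals the full rational group $K_{0}(O_{X_{j},y} \ \mathrm{on} \ y)_{\mathbb{Q}}$, and the same argument — flagged by the footnote to the statement, now using that $K_{-1}$ of the residue field $k(x)$ vanishes, so that Riemann--Roch without denominators \cite{Soule} kills every eigenspace of the absolute group at $x$ — identifies the second with $K_{-1}(O_{X_{j},x} \ \mathrm{on} \ x)_{\mathbb{Q}}$. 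On the de Rham side, weight $2$ picks out the single sheaf $\Omega^{1}_{X/\mathbb{Q}}$ (this is the case $i=2$, $\Omega^{2i-3}=\Omega^{1}$, of the relative computation recorded in the proof of Lemma~\ref{lemma: OmitMilnor}), while the trivial thickening contributes the $j$-fold direct sum, giving $H^{2}_{y}((\Omega^{1}_{X/\mathbb{Q}})^{\oplus j})$ and $H^{3}_{x}((\Omega^{1}_{X/\mathbb{Q}})^{\oplus j})$; the left vertical arrow is then $j$ copies of the differential $\partial_{1}^{2,-2}$ of the coniveau complex of $\Omega^{1}_{X/\mathbb{Q}}$, whence the notation $(\partial_{1}^{2,-2})^{j}$, and commutativity of the square is exactly the naturality of the Chern character with respect to the boundary maps in the localization sequences defining the two spectral sequences, which is part of the assertion of Theorem~3.14.

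Finally, for the claim that the bottom $\mathrm{Ch}$ is an isomorphism I would argue as in Lemma~\ref{lemma: OmitMilnor}: for a closed point $x\in X^{(3)}$ the absolute group $K_{-1}(O_{X,x} \ \mathrm{on} \ x)_{\mathbb{Q}}$ vanishes, so $K_{-1}(O_{X_{j},x} \ \mathrm{on} \ x)_{\mathbb{Q}}$ coincides with its relative summand, and that summand is identified with $H^{3}_{x}((\Omega^{1}_{X/\mathbb{Q}})^{\oplus j})$ precisely by the Chern character (Corollary~3.11 of \cite{Y-2}, or Corollary~9.5 of \cite{DHY}); hence the lower $\mathrm{Ch}$ is bijective. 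The step I expect to be the real obstacle is not any one of these but the index bookkeeping tying them together: checking that the weight $q=2$, the homological degrees $q-p=0,-1$, the codimension filtration, and the exterior-power index on the de Rham side all match so that the specialization of Theorem~3.14 returns exactly the displayed square, and in particular that $q=2$ is the unique weight for which both rows are nonzero, which is what legitimizes replacing the Milnor K-groups by the full rational K-groups.
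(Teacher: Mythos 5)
Your proposal is correct and matches the paper's intent: the paper gives no independent proof of this statement, importing it as the $q=2$ specialization of Theorem~3.14 of \cite{Y-2}, and your corner identifications (via the eigenspace vanishing of Lemma~\ref{lemma: OmitMilnor}, the vanishing of $K_{-1}$ of residue fields for the footnote's replacement of $K^{M}_{-1}$, and the relative-K-group-to-local-cohomology isomorphism for the bottom $\mathrm{Ch}$) are exactly the ingredients the paper's footnote and surrounding lemmas supply. Your sketch is, if anything, more explicit than the paper itself on the index bookkeeping.
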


To fix notations,  for each non-negative integre $j$, $D^{\mathrm{perf}}(X_{j})$ denotes the derived category obtained from the exact category of perfect complex on $X_{j}$ and $\mathcal{L}_{(i)}(X_{j})$ is defined to be
\[
  \mathcal{L}_{(i)}(X_{j}) := \{ E \in D^{\mathrm{perf}}(X_{j}) \mid \mathrm{codim_{Krull}(supph(E))} \geq -i \},
\]
where the closed subset $\mathrm{supph}(E) \subset X$ is the support of the total homology of the perfect complex $E$.
Let $(\mathcal{L}_{(i)}(X_{j})/\mathcal{L}_{(i-1)}(X_{j}))^{\#}$ denote the idempotent completion of the Verdier quotient $\mathcal{L}_{(i)}(X_{j})/\mathcal{L}_{(i-1)}(X_{j})$,
 \begin{theorem} \cite{B-3} \label{theorem: Balmer theorem}
 In the notation above, for each $i \in \mathbb{Z}$, localization induces an equivalence
\[
 (\mathcal{L}_{(i)}(X_{j})/\mathcal{L}_{(i-1)}(X_{j}))^{\#}  \simeq \bigsqcup_{x_{j} \in X_{j}^{(-i)}}D_{{x_{j}}}^{\mathrm{perf}}(X_{j})
\]
between the idempotent completion of the quotient $\mathcal{L}_{(i)}(X_{j})/\mathcal{L}_{(i-1)}(X_{j})$ and the coproduct over $x_{j} \in X_{j}^{(-i)}$ of the derived category of perfect complexes of $ O_{X_{j},x_{j}}$-modules with homology supported on the closed point $x_{j} \in \mathrm{Spec}(O_{X_{j},x_{j}})$. Consequently, localization induces an isomorphism
\[
 K_{0}((\mathcal{L}_{(i)}(X_{j})/\mathcal{L}_{(i-1)}(X_{j}))^{\#})  \simeq \bigoplus_{x_{j} \in X_{j}^{(-i)}}K_{0}(O_{X_{j},x_{j}} \ \mathrm{on} \ x_{j}).
\]
\end{theorem}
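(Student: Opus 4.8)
The final statement to prove is Balmer's theorem (Theorem~\ref{theorem: Balmer theorem}), which identifies the subquotient categories in the coniveau filtration of perfect complexes with a coproduct of local categories, and the corresponding $K_0$ decomposition. Let me sketch how I would prove this.

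\textbf{Proof proposal.} The plan is to reduce the global statement to a purely local one via Thomason--Neeman localization, and then to patch the local pieces together over the points of $X_j^{(-i)}$. First I would recall the basic input: for a quasi-compact quasi-separated scheme $T$ and an open $U \subseteq T$ with closed complement $Z$, the sequence of triangulated categories $D^{\mathrm{perf}}_Z(T) \to D^{\mathrm{perf}}(T) \to D^{\mathrm{perf}}(U)$ is, after idempotent completion of the last term, a Verdier localization sequence (Thomason--Trobaugh, Neeman). Applying this to $T = X_j$, I would stratify by Krull codimension: the category $\mathcal{L}_{(i)}(X_j)$ consists of perfect complexes whose homology support has codimension $\geq -i$ in $X$, and the quotient $\mathcal{L}_{(i)}(X_j)/\mathcal{L}_{(i-1)}(X_j)$ captures complexes supported "exactly" in codimension $-i$ modulo those supported in higher codimension.

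The second step is to localize at each point. For a fixed $x_j \in X_j^{(-i)}$ with local ring $O_{X_j,x_j}$, restriction to the small scheme $\mathrm{Spec}(O_{X_j,x_j})$ and further localization to the punctured spectrum exhibits $D^{\mathrm{perf}}_{x_j}(X_j)$ --- perfect complexes of $O_{X_j,x_j}$-modules with homology supported at the closed point --- as the relevant local subquotient. The key geometric fact is that the various closed points of codimension $-i$ have disjoint "first-order neighborhoods" in the relevant sense, so that the subquotient $\mathcal{L}_{(i)}/\mathcal{L}_{(i-1)}$ decomposes as a product over these points; on $K$-theory, which commutes with (finite and filtered) coproducts of the relevant type, this becomes the asserted direct sum $\bigoplus_{x_j} K_0(O_{X_j,x_j} \text{ on } x_j)$. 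Here one uses that idempotent completion does not change $K_0$, so the $K_0$ statement follows from the categorical equivalence after idempotent completion. I would note that since $X_j = X \times_k \mathrm{Spec}(k[\varepsilon]/\varepsilon^{j+1})$ is a nilpotent thickening of $X$, the underlying topological space is that of $X$, so $X_j^{(-i)}$ is identified with $X^{(-i)}$ and the codimension bookkeeping is exactly that of $X$; only the local rings are thickened.

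The main obstacle, and the part requiring the most care, is verifying that the Verdier quotient $\mathcal{L}_{(i)}(X_j)/\mathcal{L}_{(i-1)}(X_j)$ genuinely splits as the claimed coproduct rather than merely admitting a functor to it --- i.e., that a perfect complex supported in codimension exactly $-i$ is, up to the lower stratum, a finite direct sum of pieces each supported at a single point, and that these local categories assemble without interaction. This is where one invokes that homology supports of perfect complexes are closed and that a codimension-$(-i)$ closed subset meets $X^{(-i)}$ in a discrete (generically finite) set of points, allowing a Mayer--Vietoris/excision argument to separate them. Once this splitting is in place, the equivalence with $\bigsqcup_{x_j} D^{\mathrm{perf}}_{x_j}(X_j)$ and hence the $K_0$ isomorphism are formal consequences of localization and the fact that $K_0$ is insensitive to idempotent completion. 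Since the statement is attributed to \cite{B-3}, I would in practice simply cite Balmer's proof for the structural equivalence and only spell out the reduction showing our graded pieces are the ones his theorem addresses, together with the remark (footnoted in Theorem~\ref{theorem: firstorder}) that rationally the Milnor $K$-summand is the whole of $K_0$ in the relevant degree by the argument of Lemma~\ref{lemma: OmitMilnor}.
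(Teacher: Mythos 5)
The paper gives no proof of this statement at all---it is quoted verbatim from Balmer \cite{B-3}---so there is no internal argument to compare against; your sketch is essentially the standard proof from that source (Thomason--Neeman localization, stratification of $D^{\mathrm{perf}}(X_j)$ by codimension of homological support, Hom-vanishing between objects supported at distinct points of $X_j^{(-i)}$ to split the subquotient, and the observation that the nilpotent thickening leaves the underlying space of $X$ unchanged). One inaccuracy worth correcting: your claim that ``idempotent completion does not change $K_0$'' is false in general---by cofinality the map $K_0(\mathcal{A}) \to K_0(\mathcal{A}^{\#})$ is only injective, and this failure of surjectivity is precisely why Balmer must take the completion $(\mathcal{L}_{(i)}(X_j)/\mathcal{L}_{(i-1)}(X_j))^{\#}$ before the coproduct decomposition holds; fortunately you do not need that claim, since the asserted $K_0$ isomorphism is a formal consequence of the equivalence already stated at the level of idempotent-completed categories.
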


In the notation of  Set-up(page 4), we consider $O_{X_{1},x}/(fh+ \varepsilon w_{1}, g)$, where $w_{1}$ is an arbitrary element of $O_{X,x}$. The Koszul resolution of  $O_{X_{1},x}/ (fh+ \varepsilon w_{1}, g)$, denoted $L'$,
\begin{equation}
0 \to O_{X_{1},x} \xrightarrow{(g,-(fh+\varepsilon w_{1}))^{\mathrm{T}}} O_{X_{1},x}^{\oplus 2} \xrightarrow{(fh+ \varepsilon w_{1}, g)} O_{X_{1},x},
\end{equation}
defines an element of $K_{0}((\mathcal{L}_{-2}(X_{1})/\mathcal{L}_{-3}(X_{1}))^{\#})$.

\begin{theorem} \label{theorem: TheoremKernel1}
In the notation above, $L'$ is a Milnor K-theoretic cycle defined in Definition \ref{definition: Milnor K-theoretic Chow groups},
\[
L' \in Z^{M}_{2}(D^{\mathrm{Perf}}(X_{1})).
\]
\end{theorem}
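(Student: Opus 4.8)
The plan is to verify that $L'$ lies in the kernel of the differential $d_{1,X_{1}}^{2,-2}$, since by Definition \ref{definition: Milnor K-theoretic Chow groups} this is exactly what it means for $L'$ to belong to $Z^{M}_{2}(D^{\mathrm{Perf}}(X_{1}))$. First I would recall from Theorem \ref{theorem: firstorder} that $d_{1,X_{1}}^{2,-2}$ fits into a commutative square with the Chern character isomorphisms and the topological differential $(\partial_{1}^{2,-2})^{j}$ (with $j=1$) on local cohomology of $\Omega^{1}_{X/\mathbb{Q}}$. So it suffices to compute the image of $L'$ in $\bigoplus_{x\in X^{(3)}}K_{-1}(O_{X_{1},x}\ \mathrm{on}\ x)_{\mathbb{Q}}$, or equivalently to chase the boundary of the class of the Koszul complex $F_{\bullet}(fh+\varepsilon w_{1},g)$ through the localization sequences underlying the coniveau spectral sequence.

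The key observation is that $L'$ is supported on the curve defined (generically) by the ideal $(fh+\varepsilon w_{1},g)$, whose underlying reduced cycle is $(f)\cdot(g) \cup (h)\cdot(g)$ — that is, the union of $Y$ and the curve $Z$ of Definition \ref{definition: definingCurveZ}. The differential $d_{1,X_{1}}^{2,-2}$ measures, roughly, how the codimension-$2$ cycle $L'$ fails to ``close up'' — it records the codimension-$3$ points in the closure where something degenerates. Since $fh+\varepsilon w_{1}$ and $g$ form a regular sequence in $O_{X_{1},x}$ for every $x$ (as $f,g,h$ is a regular system of parameters and we are only adding a nilpotent perturbation), the Koszul complex $F_{\bullet}(fh+\varepsilon w_{1},g)$ is a genuine resolution at every codimension-$2$ point, and the local contributions at codimension-$3$ points vanish. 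I would make this precise by noting that the component of $d_{1,X_{1}}^{2,-2}$ at a point $x\in X^{(3)}$ is computed from the Koszul complex localized at $O_{X_{1},x}$, which remains exact in the relevant range because $(fh+\varepsilon w_{1},g)$ is still a regular sequence there; hence each local boundary term is zero.

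Concretely, the cleanest route is probably: identify $[L']$ in $K_{0}(O_{X_{1},y}\ \mathrm{on}\ y)_{\mathbb{Q}}$ for each of the two generic points $y$ (of $Y$) and $z$ (of $Z$), apply the Chern character of Theorem \ref{theorem: firstorder} to land in $H^{2}_{y}(\Omega^{1}_{X/\mathbb{Q}})\oplus H^{2}_{z}(\Omega^{1}_{X/\mathbb{Q}})$, and then show $(\partial_{1}^{2,-2})$ annihilates the resulting class. The latter is a residue computation: the boundary map sends the class determined by $\mathrm{dlog}(fh+\varepsilon w_{1})\wedge\mathrm{dlog}(g)$-type data to its residues along codimension-$3$ strata, and these residues vanish because $fh+\varepsilon w_{1}$ and $g$ cut out a locally complete intersection curve everywhere (no embedded or lower-dimensional components appear, the nilpotent $\varepsilon w_{1}$ not changing the support). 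Alternatively, and perhaps more in the spirit of the paper, one can invoke Theorem \ref{theorem: Balmer theorem} to reduce the statement to a purely local one at each $x\in X_{1}^{(3)}$ and then cite the standard fact that a Koszul complex on a regular sequence represents a cycle (is a ``relative dimension zero'' perfect complex with the expected support), so its image under the next differential is zero.

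The main obstacle I anticipate is bookkeeping rather than conceptual: one must be careful that adding the nilpotent term $\varepsilon w_{1}$ does not spoil regularity of the sequence $(fh+\varepsilon w_{1},g)$ at non-generic points — in particular at the codimension-$3$ points lying on both $Y$ and $Z$, where $fh$ vanishes to higher order — and that the identification of the differential with the topological residue map from Theorem \ref{theorem: firstorder} is applied with the correct indexing ($q=2$, $j=1$). A secondary subtlety is making sure the class of $L'$ genuinely lies in the Milnor (weight-$2$) eigenspace, but this is handled uniformly by Lemma \ref{lemma: OmitMilnor}, which tells us the Milnor part exhausts the whole rationalized $K_{0}$ with support in this range, so no eigenspace projection is actually lost.
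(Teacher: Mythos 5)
Your argument is correct, but it takes a genuinely different route from the paper's. The paper proves $d_{1,X_{1}}^{2,-2}(L')=0$ by explicit computation: it splits $L'$ via Theorem \ref{theorem: Balmer theorem} into $L'_{1}$ (at $y$) and $L'_{2}$ (at $z$), computes their images $\alpha,\beta$ under the Ch map as explicit local cohomology classes carried by the Koszul complexes $F_{\bullet}(f,g)$ and $F_{\bullet}(h,g)$ with the data $\frac{w_{1}}{h}dg$ and $\frac{w_{1}}{f}dg$, and then writes down an explicit chain isomorphism between the two Koszul resolutions of $O_{X,x}/(f,g,h)$ showing $\partial_{1}^{2,-2}(\alpha)=-\partial_{1}^{2,-2}(\beta)$, so that the two boundary terms cancel. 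You instead argue abstractly: $(fh+\varepsilon w_{1},g)$ is a regular sequence in $O_{X_{1},x}$, so $L'$ is a perfect complex whose homology is supported in codimension $\geq 2$ at every point; its class in $K_{0}((\mathcal{L}_{(-2)}(X_{1})/\mathcal{L}_{(-3)}(X_{1}))^{\#})$ therefore lifts to $K_{0}(\mathcal{L}_{(-2)}(X_{1}))$ and is killed by the boundary of the localization sequence, hence by $d_{1,X_{1}}^{2,-2}$. This is cleaner and more general, and correctly reduces the whole proof to the regularity check you flag (which does hold: $f,g,h$ is a regular system of parameters and the $\varepsilon$-perturbation does not affect it). What your route buys in economy it loses in information: the paper's computation exhibits the cancellation mechanism that is the point of the whole note --- the boundary terms of $L'_{1}$ and $L'_{2}$ do \emph{not} vanish individually (indeed $\mu(Y')=L'_{1}$ alone fails to be a Milnor K-theoretic cycle in Case 2 of Theorem \ref{theorem: answerToNg}), only their sum does, and the explicit formula for $\partial_{1}^{2,-2}(\alpha)$ is reused in Lemma \ref{lemma: trivialdeform}. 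For that reason you should rephrase ``each local boundary term is zero'': what vanishes is the component of $d_{1}(L')$ at each $x\in X^{(3)}$, not the contribution of each of the two codimension-two components of the support, and as written the sentence invites the false stronger reading.
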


\begin{proof}
Under the isomorphism in Theorem \ref{theorem: Balmer theorem}(take $j=1$ and $i=-2$),
\[
 K_{0}((\mathcal{L}_{(-2)}(X_{1})/\mathcal{L}_{(-3)}(X_{1}))^{\#})  \simeq \bigoplus\limits_{y \in X^{(2)}}K_{0}(O_{X_{1},y} \ \mathrm{on} \ y), 
\]
$L'$ decomposes into the direct sum $L'_{1}$ and $L'_{2}$:
\[
\begin{cases}
 \begin{CD}
L'_{1}: 0 \to (O_{X_{1},x})_{(f,g)} \xrightarrow{(\dfrac{g}{h},-(f+\varepsilon \dfrac{w_{1}}{h}))^{\mathrm{T}}} (O_{X_{1},x})_{(f,g)}^{\oplus 2} \xrightarrow{(f+\varepsilon \dfrac{w_{1}}{h}, \dfrac{g}{h})} (O_{X_{1},x})_{(f,g)}, \\
L'_{2}: 0 \to (O_{X_{1},x})_{(h,g)} \xrightarrow{(\dfrac{g}{f},-(h+\varepsilon \dfrac{w_{1}}{f}))^{\mathrm{T}}} (O_{X_{1},x})_{(h,g)}^{\oplus 2} \xrightarrow{(h+\varepsilon \dfrac{w_{1}}{f}, \dfrac{g}{f})} (O_{X_{1},x})_{(h,g)}.
\end{CD}
\end{cases}
\]
Since $h^{-1}$ exists in $(O_{X_{1},x})_{(f,g)}$ and $f^{-1}$ exists in $(O_{X_{1},x})_{(g,h)}$, the above two complexes are quasi-isomorphic to the following complexes respectively, 
\[
\begin{cases}
 \begin{CD}
L'_{1}: 0 \to (O_{X_{1},x})_{(f,g)} \xrightarrow{(g,-(f+\varepsilon \dfrac{w_{1}}{h}))^{\mathrm{T}}} (O_{X_{1},x})_{(f,g)}^{\oplus 2} \xrightarrow{(f+\varepsilon \dfrac{w_{1}}{h}, g)} (O_{X_{1},x})_{(f,g)}, \\
L'_{2}: 0 \to (O_{X_{1},x})_{(h,g)} \xrightarrow{(g,-(h+\varepsilon \dfrac{w_{1}}{f}))^{\mathrm{T}}} (O_{X_{1},x})_{(h,g)}^{\oplus 2} \xrightarrow{(h+\varepsilon \dfrac{w_{1}}{f}, g)} (O_{X_{1},x})_{(h,g)},
\end{CD}
\end{cases}
\]
where  $L'_{1} \in K_{0}(O_{X_{1},y} \ \mathrm{on} \ y)$ and $L'_{2} \in K_{0}(O_{X_{1},z} \ \mathrm{on} \ z)$, $z$ is defined in Definition \ref{definition: definingCurveZ}.

The Ch map in Theorem \ref{theorem: firstorder}(take j=1),
\begin{equation}
\mathrm{Ch}: \bigoplus\limits_{y \in X^{(2)}}K_{0}(O_{X_{1},y} \ \mathrm{on} \ y) \to \bigoplus\limits_{y \in X^{(2)}}H_{y}^{2}(\Omega_{X/\mathbb{Q}}^{1}),
\end{equation}
may be described by a beautiful construction of B. Ang\'eniol and M. Lejeune-Jalabert \cite{A-LJ}, see also page 5-6 of \cite{Y-3} for a brief summary. In particular,
the image of  $L'_{1}$ under the Ch map is represented by the following diagram, 
\begin{equation}
\begin{cases}
 \begin{CD}
   F_{\bullet}(f,g) @>>> (O_{X,x})_{(f,g)}/(f,g)@>>> 0  \\
   (O_{X,x})_{(f,g)} @> \frac{w_{1}}{h}dg >> \Omega^{1}_{(O_{X,x})_{(f,g)}/\mathbb{Q}},
 \end{CD}
\end{cases}
\end{equation}
where $d=d_{\mathbb{Q}}$ and $F_{\bullet}(f,g) $ is the Koszul complex
\[
(O_{X,x})_{(f,g)} \xrightarrow{(g,-f)^{\mathrm{T}}}  (O_{X,x})_{(f,g)}^{\oplus 2} \xrightarrow{(f,g)} (O_{X,x})_{(f,g)},
\]
To be precise, the above diagram(3.3) gives an element $\alpha$ in \\ $Ext_{(O_{X,x})_{(f,g)}}^{2}((O_{X,x})_{(f,g)}/(f,g), \Omega^{1}_{(O_{X,x})_{(f,g)}/\mathbb{Q}})$. Noting that 
\[
H_{y}^{2}(\Omega^{1}_{ X/\mathbb{Q}})=\varinjlim_{n \to \infty}Ext_{(O_{X,x})_{(f,g)}}^{2}((O_{X,x})_{(f,g)}/(f,g)^{n}, \Omega^{1}_{(O_{X,x})_{(f,g)}/\mathbb{Q}}),
\]
the image $[\alpha]$ of $\alpha$ under the limit is in $H_{y}^{2}(\Omega^{1}_{X /\mathbb{Q}})$ and it is the image of  $L'_{1}$ under the Ch map(3.2). 

Similarly, the image of $L'_{2}$ under the Ch map(3.2) in $H_{z}^{2}(\Omega^{1}_{X /\mathbb{Q}})$ is represented by the following diagram, denoted $\beta$,
\begin{equation}
\begin{cases}
 \begin{CD}
    F_{\bullet}(h,g) @>>> (O_{X,x})_{(h,g)}/(h,g)@>>> 0  \\
   (O_{X,x})_{(h,g)} @> \frac{w_{1}}{f}dg >> \Omega^{1}_{(O_{X,x})_{(h,g)}/\mathbb{Q}},
 \end{CD}
\end{cases}
\end{equation}
where $d=d_{\mathbb{Q}}$ and $F_{\bullet}(h,g)$ is the Koszul complex
\[
(O_{X,x})_{(h,g)} \xrightarrow{(g,-h)^{\mathrm{T}}} (O_{X,x})_{(h,g)}^{\oplus 2} \xrightarrow{(h,g)} (O_{X,x})_{(h,g)}.
\]

$\partial^{2,-2}_{1}$ in Theorem \ref{theorem: firstorder} maps $\alpha$(diagram 3.3) in $H^{3}_{x}(\Omega^{1}_{X/\mathbb{Q}})$ to :
\[
\begin{cases}
 \begin{CD}
   O_{X,x} @>M_{1}>>  O_{X,x}^{\oplus 3} @>M_{2}>> O_{X,x}^{\oplus 3}  @>M_{3}>> O_{X,x} @>>> O_{X,x}/(f, g,h) @>>> 0  \\
   O_{X,x} @> w_{1} dg >> \Omega^{1}_{O_{X,x}/\mathbb{Q}},
 \end{CD}
\end{cases}
\]
where $M_{1},  M_{2}$ and $M_{3}$ are matrices associated to the Koszul resolution of $O_{X,x}/(f, g,h)$:
\[
M_{1}=\begin{pmatrix} f \\ -g \\ h \end{pmatrix}, M_{2}=\begin{pmatrix} 0  & -h & -g \\ -h & 0 & f \\ g &f &0 \end{pmatrix}, M_{3}=(f,g,h).
\]

Similarly, $\partial^{2,-2}_{1}$ in Theorem \ref{theorem: firstorder}
maps $\beta$(diagram 3.4) in $H^{3}_{x}(\Omega^{1}_{X/\mathbb{Q}})$ to :
\[
\begin{cases}
 \begin{CD}
  O_{X,x} @>N_{1}>>  O_{X,x}^{\oplus 3} @>N_{2}>> O_{X,x}^{\oplus 3}  @>N_{3}>> O_{X,x} @>>> O_{X,x}/(h,g,f) @>>> 0  \\
   O_{X,x} @>w_{1} dg >> \Omega^{1}_{O_{X,x}/\mathbb{Q}},
 \end{CD}
\end{cases}
\]
where $N_{1},  N_{2}$ and $N_{3}$ are matrices associated to the Koszul resolution of $O_{X,x}/(h,g,f)$:
\[
N_{1}=\begin{pmatrix} h \\ -g \\ f \end{pmatrix}, N_{2}=\begin{pmatrix} 0  & -f & -g \\ -f & 0 & h \\ g &h &0 \end{pmatrix}, N_{3}=(h,g,f).
\]

Noting the commutative diagram below 
\[\displaystyle
  \begin{CD}
    O_{X,x} @>M_{1}>>  O_{X,x}^{\oplus 3} @>M_{2}>> O_{X,x}^{\oplus 3}  @>M_{3}>> O_{X,x} @>>> O_{X,x}/(f, g,h) @>>> 0   \\
   @V-1VV @VW_{1}VV @VW_{2}VV @V1VV @V \cong VV \\
   O_{X,x} @>N_{1}>>  O_{X,x}^{\oplus 3} @>N_{2}>> O_{X,x}^{\oplus 3}  @>N_{3}>> O_{X,x} @>>> O_{X,x}/(h,g,f) @>>> 0 , 
  \end{CD}
\]
where $W_{1}$ and $W_{2}$ stand for the following matrices:
\[
W_{1}=\begin{pmatrix} 0 & 0 & -1 \\ 0 & -1 & 0 \\ 1 & 0 & 0 \end{pmatrix}, W_{2}=\begin{pmatrix} 0  & 0 & 1 \\ 0 & 1 & 0 \\ 1 &0 &0 \end{pmatrix},
\]
one can see that  $\partial^{2,-2}_{1}(\alpha)$ and $\partial^{2,-2}_{1}(\beta)$ are negative of each other in $Ext_{O_{X,x}}^{3}(O_{X,x}/(f,g,h), \Omega^{1}_{O_{X,x}/\mathbb{Q}})$. Hence, $\partial^{2,-2}_{1}(\alpha + \beta)$ is $0$ in $H^{3}_{x}(\Omega^{1}_{X/\mathbb{Q}})$. Therefore, $d^{2,-2}_{1,X_{1}}(L') = 0$ because of the commutative diagram in Theorem \ref{theorem: firstorder}(take j=1).

\end{proof}

Now, we consider $O_{X_{2},x}/(fh+ \varepsilon w_{1} + \varepsilon^{2} w_{2}, g)$, where $w_{1}$, $w_{2}$ are arbitrary elements of $O_{X,x}$. The Koszul resolution of  $O_{X_{2},x}/ (fh+ \varepsilon w_{1} + \varepsilon^{2} w_{2}, g)$, denoted $L''$,
\begin{equation}
 0 \to O_{X_{2},x} \xrightarrow{(g,-(fh+\varepsilon w_{1} + \varepsilon^{2} w_{2}))^{\mathrm{T}}} O_{X_{2},x}^{\oplus 2} \xrightarrow{(fh+ \varepsilon w_{1} + \varepsilon^{2} w_{2}, g)} O_{X_{2},x},
\end{equation}
defines an element of $K_{0}(\mathcal{L}_{-2}(X_{2})/\mathcal{L}_{-3}(X_{2}))^{\#})$.

Under the isomorphism in Theorem \ref{theorem: Balmer theorem}(for $j=2$ and $i=-2$),
\[
 K_{0}((\mathcal{L}_{(-2)}(X_{2})/\mathcal{L}_{(-3)}(X_{2}))^{\#})  \simeq \bigoplus\limits_{y \in X^{(2)}} K_{0}(O_{X_{2},y} \ \mathrm{on} \ y),
\]
$L''$ decomposes into the direct sum of  $L''_{1}$ and $L''_{2}$:
\[
\begin{cases}
 \begin{CD}
L''_{1}: 0 \to (O_{X_{2},x})_{(f,g)}  \xrightarrow{(g,-(f+\varepsilon \dfrac{w_{1}}{h} + \varepsilon^{2} \dfrac{w_{2}}{h}))^{\mathrm{T}}} (O_{X_{2},x})_{(f,g)}^{\oplus 2} \xrightarrow{(f+\varepsilon \dfrac{w_{1}}{h} + \varepsilon^{2} \dfrac{w_{2}}{h}, g)} (O_{X_{2},x})_{(f,g)}, \\
L''_{2}: 0 \to (O_{X_{2},x})_{(h,g)} \xrightarrow{(g,-(h+\varepsilon \dfrac{w_{1}}{f} + \varepsilon^{2} \dfrac{w_{2}}{f}))^{\mathrm{T}}} (O_{X_{2},x})_{(h,g)}^{\oplus 2} \xrightarrow{(h+\varepsilon \dfrac{w_{1}}{f} + \varepsilon^{2} \dfrac{w_{2}}{f}, g)} (O_{X_{2},x})_{(h,g)}.
\end{CD}
\end{cases}
\]

The image of $L''_{1}$ under the Ch map in Theorem \ref{theorem: firstorder}(take j=2)
\[
\mathrm{Ch}: \bigoplus\limits_{y \in X^{(2)}}K_{0}(O_{X_{2},y} \ \mathrm{on} \ y) \to \bigoplus\limits_{y \in X^{(2)}}H_{y}^{2}((\Omega_{X/\mathbb{Q}}^{1})^{\oplus 2})
\]
may be described similarly as the Ch map in Theorem ~\ref{theorem: TheoremKernel1} and is represented by the following diagram,
\[
\begin{cases}
 \begin{CD}
   F_{\bullet}(f,g)  @>>> (O_{X,x})_{(f,g)}/(f,g) @>>> 0  \\
   (O_{X,x})_{(f,g)} @> \frac{w_{1}}{h}dg + \frac{w_{2}}{h} dg >> (\Omega^{1}_{(O_{X,x})_{(f,g)}/\mathbb{Q}})^{\oplus 2},
 \end{CD}
\end{cases}
\]
where $F_{\bullet}(f,g) $ is the Koszul complex
\[
(O_{X,x})_{(f,g)} \xrightarrow{(g,-f)^{\mathrm{T}}}  (O_{X,x})_{(f,g)}^{\oplus 2} \xrightarrow{(f,g)} (O_{X,x})_{(f,g)}.
\]
By mimicking the argument in Theorem ~\ref{theorem: TheoremKernel1}, we can show

\begin{theorem} \label{theorem:theoremKernel2}
In the notation above, $L''$ is a Milnor K-theoretic cycle defined in Definition \ref{definition: Milnor K-theoretic Chow groups},
\[
L'' \in Z^{M}_{2}(D^{\mathrm{Perf}}(X_{2})).
\]
It is obvious that $L''$ is a deformation of  $L'(3.1) $ in the sense of Definition \ref{definition: deformation}.
\end{theorem}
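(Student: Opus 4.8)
The plan is to transcribe the proof of Theorem~\ref{theorem: TheoremKernel1} almost verbatim, carrying along the extra copy of $\Omega^{1}_{X/\mathbb{Q}}$ contributed by the $\varepsilon^{2}$-term. First I would apply Theorem~\ref{theorem: Balmer theorem} with $j=2$ and $i=-2$ to split $L''$ as the direct sum $L''_{1}\oplus L''_{2}$ displayed above, using that $h$ is a unit in $(O_{X_{2},x})_{(f,g)}$ and that $f$ is a unit in $(O_{X_{2},x})_{(h,g)}$; this exhibits $L''_{1}\in K_{0}(O_{X_{2},y}\ \mathrm{on}\ y)$ and $L''_{2}\in K_{0}(O_{X_{2},z}\ \mathrm{on}\ z)$, with $z$ as in Definition~\ref{definition: definingCurveZ}. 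By the commutative square of Theorem~\ref{theorem: firstorder} with $j=2$ --- whose left column is $d^{2,-2}_{1,X_{2}}$, whose right column is $(\partial^{2,-2}_{1})^{2}$, and whose bottom $\mathrm{Ch}$ map is an isomorphism --- together with additivity of $\mathrm{Ch}$, it suffices to show that $(\partial^{2,-2}_{1})^{2}$ annihilates $\mathrm{Ch}(L''_{1})+\mathrm{Ch}(L''_{2})$ in $\bigoplus_{x\in X^{(3)}}H^{3}_{x}((\Omega^{1}_{X/\mathbb{Q}})^{\oplus 2})$.

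Next I would compute the two representatives via the Ang\'eniol--Lejeune-Jalabert construction recalled in Theorem~\ref{theorem: TheoremKernel1}. As indicated just before the statement, $\mathrm{Ch}(L''_{1})$ is represented by the Koszul complex $F_{\bullet}(f,g)$ over $(O_{X,x})_{(f,g)}$ together with the bottom arrow $(w_{1}/h)\,dg\oplus(w_{2}/h)\,dg$ into $(\Omega^{1}_{(O_{X,x})_{(f,g)}/\mathbb{Q}})^{\oplus 2}$, while $\mathrm{Ch}(L''_{2})$ is represented by the analogous diagram over $(O_{X,x})_{(h,g)}$ with bottom arrow $(w_{1}/f)\,dg\oplus(w_{2}/f)\,dg$. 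Since $(\partial^{2,-2}_{1})^{2}$ acts in each of the two $\Omega^{1}_{X/\mathbb{Q}}$-coordinates separately by the map $\partial^{2,-2}_{1}$ already computed in Theorem~\ref{theorem: TheoremKernel1}, and since passing from $(O_{X,x})_{(f,g)}$ (resp. $(O_{X,x})_{(h,g)}$) to $O_{X,x}$ clears the denominator $h$ (resp. $f$) in both coordinates, I would obtain that $(\partial^{2,-2}_{1})^{2}(\mathrm{Ch}(L''_{1}))$ is represented by the Koszul resolution of $O_{X,x}/(f,g,h)$ with matrices $M_{1},M_{2},M_{3}$ and bottom arrow $w_{1}\,dg\oplus w_{2}\,dg$, and that $(\partial^{2,-2}_{1})^{2}(\mathrm{Ch}(L''_{2}))$ is represented by the Koszul resolution of $O_{X,x}/(h,g,f)$ with matrices $N_{1},N_{2},N_{3}$ and the \emph{same} bottom arrow $w_{1}\,dg\oplus w_{2}\,dg$. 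The map of complexes $(-1,W_{1},W_{2},1)$ exhibited in Theorem~\ref{theorem: TheoremKernel1} then identifies these two elements up to the overall sign $-1$, so their sum is $0$ in $H^{3}_{x}((\Omega^{1}_{X/\mathbb{Q}})^{\oplus 2})$; hence $(\partial^{2,-2}_{1})^{2}(\mathrm{Ch}(L''))=0$, and since the bottom $\mathrm{Ch}$ of Theorem~\ref{theorem: firstorder} is an isomorphism, $d^{2,-2}_{1,X_{2}}(L'')=0$, i.e. $L''\in Z^{M}_{2}(D^{\mathrm{Perf}}(X_{2}))$.

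Finally, to see that $L''$ is a deformation of $L'$ in the sense of Definition~\ref{definition: deformation}, I would check that $f^{*}_{2}(L'')=L'$, where $f_{2}\colon X_{1}\to X_{2}$ is the natural map. On local rings $f_{2}$ is the surjection $O_{X_{2},x}\to O_{X_{1},x}$ sending $\varepsilon^{2}\mapsto 0$, and applying $-\otimes_{O_{X_{2},x}}O_{X_{1},x}$ to the (degreewise free) Koszul complex defining $L''$ sends $fh+\varepsilon w_{1}+\varepsilon^{2}w_{2}$ to $fh+\varepsilon w_{1}$, hence returns precisely the Koszul complex $(3.1)$ defining $L'$; thus $f^{*}_{2}(L'')=L'$. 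I expect the only point needing real care to be the middle step: one must verify that the Ang\'eniol--Lejeune-Jalabert representative of the $\mathrm{Ch}$ map indeed splits as a direct sum over the two copies of $\Omega^{1}_{X/\mathbb{Q}}$, and that clearing $h$ (resp. $f$) produces the identical bottom arrow $w_{1}\,dg\oplus w_{2}\,dg$ on both sides, so that the sign cancellation of Theorem~\ref{theorem: TheoremKernel1} applies without change; the remainder is a direct copy of that argument.
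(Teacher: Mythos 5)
Your proposal is correct and follows exactly the route the paper intends: the paper itself only says the result follows ``by mimicking the argument in Theorem~\ref{theorem: TheoremKernel1}'', and your write-up is precisely that argument carried out coordinatewise on $(\Omega^{1}_{X/\mathbb{Q}})^{\oplus 2}$, plus the straightforward check that setting $\varepsilon^{2}=0$ recovers $L'$. No discrepancies with the paper's (sketched) proof.
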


In the notation of Set-up(page 4), for a first order infinitesimal deformation $Y'$ of $Y$, which is generically given by $(f+\varepsilon f_{1}, g)$,
let $\mu(Y') =F_{\bullet}(f+\varepsilon f_{1}, g) \in K_{0}(O_{X_{1},y} \ \mathrm{on} \ y)_{\mathbb{Q}}$ denote the image of $Y'$ under the map $\mu$ in Definition \ ~\ref{definition: map1}, which is the Koszul complex associated to $(f+ \varepsilon f_{1},g)$
\[
 0 \to O_{X_{1},y} \xrightarrow{(g,-(f+\varepsilon f_{1} ))^{\mathrm{T}}} O_{X_{1},y}^{\oplus 2} \xrightarrow{(f+ \varepsilon f_{1}, g)} O_{X_{1},y}.
\]

Since  $O_{X,y}= (O_{X,x})_{(f,g)}$, we write
$f_{1}=\dfrac{a_{1}}{b_{1}} \in O_{X,y}$, where $a_{1},b_{1} \in O_{X,x}$ and $b_{1} \notin  (f,g)$. Then $b_{1}$ is either in or not in the maximal idea $(f,g,h) \subset O_{X,x}$.

\begin{lemma}  \label{lemma: trivialdeform}
In the notation above, if $b_{1} \notin (f,g,h)$, $\mu(Y')$ is a Milnor K-theoretic 
cycle defined in Definition \ref{definition: Milnor K-theoretic Chow groups},
\[
\mu(Y') \in Z^{M}_{2}(D^{\mathrm{Perf}}(X_{1})).
\]
\end{lemma}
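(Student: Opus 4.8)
The plan is to run the mechanism of Theorem \ref{theorem: TheoremKernel1}, while observing that the hypothesis $b_{1}\notin(f,g,h)$ is exactly what makes the auxiliary curve $Z$ (Definition \ref{definition: definingCurveZ}) unnecessary here. By the commutative square of Theorem \ref{theorem: firstorder} with $j=1$, in which the lower horizontal $\mathrm{Ch}$ is an isomorphism, $\mu(Y')$ lies in $\mathrm{Ker}(d^{2,-2}_{1,X_{1}})$ if and only if $\partial^{2,-2}_{1}\bigl(\mathrm{Ch}(\mu(Y'))\bigr)=0$ in $\bigoplus_{x\in X^{(3)}}H^{3}_{x}(\Omega^{1}_{X/\mathbb{Q}})$; working at the point $x$ of the Set-up it is enough to kill the component at $x$.

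First I would compute $\mathrm{Ch}(\mu(Y'))$. Since $\mu(Y')=F_{\bullet}(f+\varepsilon f_{1},g)$ deforms only $f$ (recall $g_{1}=0$), the Ang\'eniol--Lejeune-Jalabert description used for $L'_{1}$ in Theorem \ref{theorem: TheoremKernel1} represents $\mathrm{Ch}(\mu(Y'))$ by the diagram
\[
\begin{CD}
   F_{\bullet}(f,g) @>>> (O_{X,x})_{(f,g)}/(f,g) @>>> 0  \\
   (O_{X,x})_{(f,g)} @> f_{1}\,dg >> \Omega^{1}_{(O_{X,x})_{(f,g)}/\mathbb{Q}},
\end{CD}
\]
with $d=d_{\mathbb{Q}}$ and $F_{\bullet}(f,g)$ the Koszul complex of $(f,g)$. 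Now comes the key point: $b_{1}\notin(f,g,h)=m_{X,x}$ means $b_{1}$ is a unit in the local ring $O_{X,x}$, hence $f_{1}=a_{1}/b_{1}\in O_{X,x}$. Consequently $f+\varepsilon f_{1}\in O_{X_{1},x}$, the displayed diagram is the localization at the prime $(f,g)$ of the \emph{same} diagram written over $O_{X,x}$, and $F_{\bullet}(f+\varepsilon f_{1},g)$ is an honest perfect complex on $\mathrm{Spec}(O_{X_{1},x})$ whose total homology is supported on $V(f+\varepsilon f_{1},g)=V(f,g)$, a curve; in the notation preceding Theorem \ref{theorem: Balmer theorem} this complex lies in $\mathcal{L}_{(-2)}(X_{1})$ near $x$.

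The conclusion is then formal. As $f,g$ is part of a regular system of parameters, $(f,g)$ is prime and $V(f,g)$ is irreducible with unique generic point $y$, so the class of $F_{\bullet}(f+\varepsilon f_{1},g)$ in $K_{0}\bigl((\mathcal{L}_{(-2)}(X_{1})/\mathcal{L}_{(-3)}(X_{1}))^{\#}\bigr)$ is carried by $y$ alone and equals $\mu(Y')$ there. Since this class visibly lifts to $K_{0}(\mathcal{L}_{(-2)}(X_{1}))$, the connecting map of the localization sequence for $\mathcal{L}_{(-3)}(X_{1})\subset\mathcal{L}_{(-2)}(X_{1})$ — which is $d^{2,-2}_{1,X_{1}}$ — annihilates it at $x$, and by the square of Theorem \ref{theorem: firstorder} so does $\partial^{2,-2}_{1}$ on $\mathrm{Ch}(\mu(Y'))$. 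Equivalently, and closer to the computation in Theorem \ref{theorem: TheoremKernel1}: the $O_{X,x}$-level diagram exhibits $\mathrm{Ch}(\mu(Y'))$ as the image of a class in $\varinjlim_{n}\mathrm{Ext}^{2}_{O_{X,x}}\bigl(O_{X,x}/(f,g)^{n},\Omega^{1}_{O_{X,x}/\mathbb{Q}}\bigr)=H^{2}_{V(f,g)}(\mathrm{Spec}\,O_{X,x},\Omega^{1}_{X/\mathbb{Q}})$, and such classes are killed by $\partial^{2,-2}_{1}$; the contrast with Theorem \ref{theorem: TheoremKernel1} is precisely that there the denominator $h$ in $w_{1}/h$ obstructs this descent, so the residual curve $Z$ had to be added to cancel the boundary.

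I expect the only delicate points to be bookkeeping ones: identifying the K-theoretic connecting map with $\partial^{2,-2}_{1}$ (handled by Theorem \ref{theorem: firstorder}), and checking that the extended complex contributes \emph{only} the class $\mu(Y')$ at $y$ with no spurious codimension-two term (handled by the primality of $(f,g)$, which is automatic since $f,g$ is part of a regular system of parameters). One can also bypass the abstract argument and redo the $\mathrm{Ext}^{3}_{O_{X,x}}$ computation of Theorem \ref{theorem: TheoremKernel1} verbatim, replacing $w_{1}/h$ by $f_{1}\in O_{X,x}$: with no denominator to clear, the boundary diagram over the Koszul resolution of $O_{X,x}/(f,g,h)$ comes out manifestly null-homotopic.
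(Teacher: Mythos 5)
Your proof is correct, and the crucial observation is the same as the paper's: $b_{1}\notin(f,g,h)=m_{X,x}$ forces $b_{1}$ to be a unit, so $f_{1}\in O_{X,x}$ and the Koszul complex $F_{\bullet}(f+\varepsilon f_{1},g)$ is defined over $O_{X_{1},x}$ itself, not merely over the localization at $(f,g)$. Where you diverge is in how you convert this into the vanishing of $d^{2,-2}_{1,X_{1}}$. The paper stays entirely inside the Ang\'eniol--Lejeune-Jalabert picture: it writes $f_{1}\,dg=\frac{f_{1}h\,dg}{h}$, applies the explicit formula for $\partial^{2,-2}_{1}$ to land on the diagram over the Koszul resolution of $O_{X,x}/(f,g,h)$ with bottom arrow $f_{1}h\,dg$, and concludes that this class vanishes because $h$ appears as an entry of $M_{1}$ (equivalently, the cocycle $f_{1}h\,dg$ lies in $(f,g,h)\cdot\Omega^{1}_{O_{X,x}/\mathbb{Q}}$, giving a visible null-homotopy). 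Your primary argument is instead the structural one: the class lifts to $K_{0}(\mathcal{L}_{(-2)}(X_{1}))$ (respectively, $\mathrm{Ch}(\mu(Y'))$ lifts to $\varinjlim_{n}\mathrm{Ext}^{2}_{O_{X,x}}(O_{X,x}/(f,g)^{n},\Omega^{1}_{O_{X,x}/\mathbb{Q}})$), so exactness of the localization sequence kills its boundary, with the primality of $(f,g)$ guaranteeing that the lifted complex contributes only at $y$. This buys a cleaner, computation-free explanation of \emph{why} the lemma holds and of why Theorem \ref{theorem: TheoremKernel1} needs the auxiliary curve $Z$ when the denominator $h$ genuinely appears; the paper's computation buys uniformity with the rest of Section 3, where the same explicit boundary formula is used in the case that the class does \emph{not} lift and the two boundary terms must be cancelled against each other. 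Your closing ``alternative'' paragraph is essentially the paper's own proof. One caveat shared with the paper: both arguments only verify vanishing of the component of $d^{2,-2}_{1,X_{1}}$ at the particular closed point $x$ of the Set-up, so neither is more complete than the other on that score.
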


\begin{proof}
If $b_{1} \notin (f,g,h)$, then $b_{1}$ is a unit in $O_{X,x}$, this says $f_{1}= \dfrac{a_{1}}{b_{1}} \in O_{X,x}$.

As explained in Theorem \ref{theorem: TheoremKernel1}(page 10),
the image of $\mu(Y')$ in $H_{y}^{2}(\Omega^{1}_{X /\mathbb{Q}})$  under the Ch map   
\[
\mathrm{Ch}: \bigoplus\limits_{y \in X^{(2)}}K_{0}(O_{X_{1},y} \ \mathrm{on} \ y) \to \bigoplus\limits_{y \in X^{(2)}}H_{y}^{2}(\Omega_{X/\mathbb{Q}}^{1}),
\]
can be represented by the following diagram:
\[
\begin{cases}
 \begin{CD}
   (O_{X,x})_{(f,g)} @>(g,-f)^{\mathrm{T}}>>  (O_{X,x})_{(f,g)}^{\oplus 2} @>(f,g)>> (O_{X,x})_{(f,g)} @>>> (O_{X,x})_{(f,g)}/(f,g)@>>> 0  \\
   (O_{X,x})_{(f,g)} @>f_{1}dg >> \Omega^{1}_{(O_{X,x})_{(f,g)}/\mathbb{Q}}.
 \end{CD}
\end{cases}
\]

Noting $f_{1}dg = \frac{f_{1}h dg}{h}$, 
$\partial^{2,-2}_{1}$ in Theorem \ref{theorem: firstorder} maps
 $\beta$ in $H^{3}_{x}(\Omega^{1}_{X/\mathbb{Q}})$ to the following diagram:
\begin{equation}
\begin{cases}
 \begin{CD}
   O_{X,x} @>M_{1}>>  O_{X,x}^{\oplus 3} @>M_{2}>> O_{X,x}^{\oplus 3}  @>M_{3}>> O_{X,x} @>>> O_{X,x}/(f, g,h) @>>> 0  \\
   O_{X,x} @>f_{1}h dg >> \Omega^{1}_{O_{X,x}/\mathbb{Q}},
 \end{CD}
\end{cases}
\end{equation}
where $M_{1},  M_{2}$ and $M_{3}$ are matrices associated to the Koszul resolution of $O_{X,x}/(f, g,h)$:
\[
M_{1}=\begin{pmatrix} f \\ -g \\ h \end{pmatrix}, M_{2}=\begin{pmatrix} 0  & -h & -g \\ -h & 0 & f \\ g &f &0 \end{pmatrix}, M_{3}=(f,g,h).
\]

Since $h$ appears in $M_{1}$,  $\partial^{2,-2}_{1}(\beta)=0 $ in $H^{3}_{x}(\Omega^{1}_{X/\mathbb{Q}})$. 

Therefore, $d^{2,-2}_{1,X_{1}}(\mu(Y')) = 0$ because of the commutative diagram in Theorem \ref{theorem: firstorder}(take j=1).

\end{proof}

\begin{theorem} \label{theorem: mainTheorem}
In the notation of Set-up(page 4), for a first order infinitesimal deformation $Y'$ of $Y$ which is generically given by $(f + \varepsilon f_{1}, g)$, where $f_{1}= \dfrac{a_{1}}{b_{1}} \in O_{X,y}=(O_{X,x})_{(f,g)}$, 

\begin{itemize}
\item Case 1: if $b_{1} \notin (f,g,h)$, then $\mu(Y') \in  Z^{M}_{2}(D^{\mathrm{Perf}}(X_{1}))$ and $\mu(Y')$ deforms to  second order in $Z^{M}_{2}(D^{\mathrm{Perf}}(X_{2}))$ in the sense of Definition \ref{definition: deformation}.\\

\item Case 2: if $b_{1} \in (f,g,h)$,  we can find another curve $Z \subset X$ and a first order infinitesimal deformation $Z'$ of $Z$, such that 

1. $\mu(Y') + \mu(Z') \in Z^{M}_{2}(D^{\mathrm{Perf}}(X_{1}))$, 

2. the first order deformation  $\mu(Y') + \mu(Z') $ deforms to second order in $Z^{M}_{2}(D^{\mathrm{Perf}}(X_{2}))$.

\end{itemize}

\end{theorem}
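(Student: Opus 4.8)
In both cases the plan is to realise the relevant first--order cycle as the class, under the localisation equivalence of Theorem \ref{theorem: Balmer theorem}, of the Koszul complex on $O_{X_{1},x}$ of a pair of functions, and then to lift it to $X_{2}$ simply by adjoining an $\varepsilon^{2}$--term; what separates the two cases is whether the denominator $b_{1}$ of $f_{1}$ is a unit of $O_{X,x}$.

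\noindent\textbf{Case 1.} Here $b_{1}\notin(f,g,h)$ forces $b_{1}$ to be a unit, so $f_{1}\in O_{X,x}$, and $\mu(Y')\in Z^{M}_{2}(D^{\mathrm{Perf}}(X_{1}))$ is exactly Lemma \ref{lemma: trivialdeform}. For the second--order lift I would take $Y''\subset X_{2}$ generically defined by $(f+\varepsilon f_{1},g)$ with vanishing $\varepsilon^{2}$--part, so that $\mu(Y'')=F_{\bullet}(f+\varepsilon f_{1},g)$ over $O_{X_{2},y}$ is represented by the Koszul complex of $(f+\varepsilon f_{1},g)$ over $O_{X_{2},x}$ (legitimate because $f_{1}\in O_{X,x}$), and then run the computation of Lemma \ref{lemma: trivialdeform} with $j=2$ in Theorem \ref{theorem: firstorder}: the image of $\mu(Y'')$ under $\mathrm{Ch}$ is represented by the diagram with bottom map $(f_{1}\,dg,0)$ into $(\Omega^{1}_{(O_{X,x})_{(f,g)}/\mathbb{Q}})^{\oplus 2}$, and $(\partial^{2,-2}_{1})^{2}$ carries it to the diagram on the Koszul resolution of $O_{X,x}/(f,g,h)$ with bottom map $(f_{1}h\,dg,0)$, which vanishes in $H^{3}_{x}((\Omega^{1}_{X/\mathbb{Q}})^{\oplus 2})$ because $f_{1}h\,dg=h\cdot(f_{1}\,dg)$ lies in $(f,g,h)\Omega^{1}$ ($h$ being an entry of $M_{1}$). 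By the commutative square of Theorem \ref{theorem: firstorder}, $d^{2,-2}_{1,X_{2}}(\mu(Y''))=0$, so $\mu(Y'')\in Z^{M}_{2}(D^{\mathrm{Perf}}(X_{2}))$; and since $f^{\ast}_{2}$ is reduction modulo $\varepsilon^{2}$, $f^{\ast}_{2}(\mu(Y''))=\mu(Y')$, so $\mu(Y'')$ is the required second--order deformation.

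\noindent\textbf{Case 2.} If $b_{1}\in(f,g,h)=m_{X,x}$, the denominator of $f_{1}=a_{1}/b_{1}$ genuinely vanishes at $x$, and this is the situation for which Theorems \ref{theorem: TheoremKernel1} and \ref{theorem:theoremKernel2} were designed. I would choose $w_{1}\in O_{X,x}$ and $m\geq 1$ so that $\mu(Y')$ is the class of $F_{\bullet}(f+\varepsilon w_{1}/h^{m},g)$ over $O_{X_{1},y}$: writing $f_{1}=a_{1}/b_{1}$ and passing to the discrete valuation ring $O_{X,x}/(f,g)$ (whose maximal ideal is generated by the image of $h$), one reads off from the orders of $a_{1}$ and $b_{1}$ there the power $m$ needed to clear the pole of $f_{1}$ along $V(h,g)$ at $x$, and takes $w_{1}$ to be a lift of $\overline{h}^{\,m}\overline{f_{1}}$; then $w_{1}/h^{m}-f_{1}\in(f,g)O_{X,y}$, which gives the asserted identification. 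Set $L':=F_{\bullet}(fh^{m}+\varepsilon w_{1},g)$ over $O_{X_{1},x}$, the $h^{m}$--analogue of the complex $L'$ of Theorem \ref{theorem: TheoremKernel1}; its support at $\varepsilon=0$ is $V(f,g)\cup V(h^{m},g)$, and Theorem \ref{theorem: Balmer theorem} decomposes it into the component $F_{\bullet}(f+\varepsilon w_{1}/h^{m},g)=\mu(Y')$ at $y$ and a component $\mu(Z')$ at the generic point $z$ of $Z:=\overline{\{z\}}$ (Definition \ref{definition: definingCurveZ}), $Z$ occurring with multiplicity $m$ and $Z'$ being generically $(h^{m}+\varepsilon w_{1}/f,g)$. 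Running the argument of Theorem \ref{theorem: TheoremKernel1} with $h^{m}$ in place of $h$ then yields $d^{2,-2}_{1,X_{1}}(L')=0$, i.e.\ $\mu(Y')+\mu(Z')=L'\in Z^{M}_{2}(D^{\mathrm{Perf}}(X_{1}))$; and $L'':=F_{\bullet}(fh^{m}+\varepsilon w_{1},g)$, now regarded over $O_{X_{2},x}$, lies in $Z^{M}_{2}(D^{\mathrm{Perf}}(X_{2}))$ and restricts along $f^{\ast}_{2}$ to $L'$, by the analogue of Theorem \ref{theorem:theoremKernel2} — giving the desired second--order deformation of $\mu(Y')+\mu(Z')$.

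\noindent\textbf{The main obstacle.} Everything outside Case 2 is a direct appeal to the results above; the real work lies in Case 2, in two parts. First, making rigorous the reduction $\mu(Y')=[F_{\bullet}(f+\varepsilon w_{1}/h^{m},g)]$ and hence pinning down the residue curve $Z$ together with its multiplicity $m$. Second — and this is the hard part — checking that the cancellation at $x$ in the proofs of Theorems \ref{theorem: TheoremKernel1} and \ref{theorem:theoremKernel2} survives when $h$ is replaced by $h^{m}$: that $\partial^{2,-2}_{1}$ sends the two Chern--class contributions at $x$ (one supported at $y$, one at $z$) to elements that are negatives of one another, via explicit chain homotopies generalising the matrices $W_{1},W_{2}$ of Theorem \ref{theorem: TheoremKernel1}; for $m\geq 2$ this may force one to enlarge the residue cycle beyond $m\cdot V(h,g)$. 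I expect this second step to be a lengthy but essentially mechanical extension of the matrix computation already carried out for $L'$.
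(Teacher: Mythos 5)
Your proposal takes essentially the same route as the paper: Case 1 is Lemma \ref{lemma: trivialdeform} together with its $j=2$ analogue, and Case 2 absorbs the denominator of $f_{1}$ into a power of $h$, realises $\mu(Y')+\mu(Z')$ as the Koszul class of $(fh^{m}+\varepsilon w_{1},g)$ under the decomposition of Theorem \ref{theorem: Balmer theorem}, and lifts it to second order by adjoining an $\varepsilon^{2}$-term, exactly as in Theorems \ref{theorem: TheoremKernel1} and \ref{theorem:theoremKernel2}. The only divergence is that you attempt the general exponent $m\geq 2$ and honestly flag that matrix verification as open, whereas the paper writes $b_{1}=b_{f}f+b_{g}g+uh^{n}$ and simply assumes $u=1$, $n=1$ ``for simplicity,'' so neither argument actually carries out the higher-multiplicity computation.
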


\begin{proof}
Case 1 follows from Lemma ~\ref{lemma: trivialdeform}. We have proved that $\mu(Y') \in  Z^{M}_{2}(D^{\mathrm{Perf}}(X_{1}))$ in Lemma ~\ref{lemma: trivialdeform}. By mimicking the proof of Lemma ~\ref{lemma: trivialdeform}, we can show that the following complex
 \[
 0 \to O_{X_{2},y} \xrightarrow{(g,-(f+\varepsilon f_{1} + \varepsilon^{2} f_{2} ))^{\mathrm{T}}} O_{X_{2},y}^{\oplus 2} \xrightarrow{(f+ \varepsilon f_{1} + \varepsilon^{2} f_{2}, g)} O_{X_{2},y},
\]
where $f_{2} \in O_{X,x}$, is in $Z^{M}_{2}(D^{\mathrm{Perf}}(X_{2}))$ and it is a deformation of $\mu(Y')$, in the sense of Definition \ref{definition: deformation}.

Now, we consider the case $b_{1} \in (f,g,h)$. Since $b_{1} \notin (f, g)$,
we can write  $b_{1}= b_{f}f + b_{g}g+uh^{n}$, where $b_{f},b_{g} \in O_{X,x}$, $u$ is a unit in $O_{X,x}$ 
and $n$ is some positive integer. For simplicity, we assume $u=1$ and $n=1$.

The ideal $(f+\varepsilon f_{1}, g) \subset O_{X,y}[\varepsilon]$ can be expressed as 
\[
(f+\varepsilon \dfrac{a_{1}}{b_{f}f + b_{g}g+h}, g) = (f+\varepsilon \dfrac{a_{1}}{b_{f}f +h}, g) = (f+\varepsilon \dfrac{a_{1}}{h}, g).
\]
So we reduce to looking at $(f+\varepsilon \dfrac{a_{1}}{h}, g)$. Let $Z$ be the curve defined in Definition ~\ref{definition: definingCurveZ} and $Z'$ be a first order infinitesimal deformation of $Z$, which is generically given by $(h+\varepsilon \dfrac{a_{1}}{f}, g)$.

By taking $w_{1} = a_{1} \in O_{X,x}$, the Koszul complex $L'$(3.1) is of the form
\begin{equation}
 0 \to O_{X_{1},x} \xrightarrow{(g,-(fh+\varepsilon a_{1}))^{\mathrm{T}}} O_{X_{1},x}^{\oplus 2} \xrightarrow{(fh+ \varepsilon a_{1}, g)} O_{X_{1},x}.
\end{equation}
Under the isomorphism in Theorem \ref{theorem: Balmer theorem}(take j=1), the complex $L'$ decomposes into the direct sum of $\mu(Y')$  and $\mu(Z')$: 
\begin{equation}
L'  = \mu(Y') + \mu(Z'),
\end{equation}
where $\mu(Y')$  and $\mu(Z')$ are of the forms
\[
\begin{cases}
 \begin{CD}
\mu(Y'): 0 \to (O_{X_{1},x})_{(f,g)} \xrightarrow{(g,-(f+\varepsilon \dfrac{a_{1}}{h}))^{\mathrm{T}}} (O_{X_{1},x})_{(f,g)}^{\oplus 2} \xrightarrow{(f+\varepsilon \dfrac{a_{1}}{h}, g)} (O_{X_{1},x})_{(f,g)}, \\
\mu(Z'): 0 \to (O_{X_{1},x})_{(h,g)} \xrightarrow{(g,-(h+\varepsilon \dfrac{a_{1}}{f}))^{\mathrm{T}}} (O_{X_{1},x})_{(h,g)}^{\oplus 2} \xrightarrow{(h+\varepsilon \dfrac{a_{1}}{f}, g)} (O_{X_{1},x})_{(h,g)}.
\end{CD}
\end{cases}
\]
According to Theorem ~\ref{theorem: TheoremKernel1}, $\mu(Y') + \mu(Z') \in Z^{M}_{2}(D^{\mathrm{Perf}}(X_{1}))$.

By taking $w_{1} = a_{1} \in O_{X,x}$ and $w_{2} = a_{2} \in O_{X,x}$, the Koszul complex $L'' $(3.5) is of the form
\begin{equation}
 0 \to O_{X_{1},x} \xrightarrow{(g,-(fh+\varepsilon a_{1} +\varepsilon^2 a_{2} ))^{\mathrm{T}}} O_{X_{1},x}^{\oplus 2} \xrightarrow{(fh+ \varepsilon a_{1} +\varepsilon^2 a_{2}, g)} O_{X_{1},x}.
\end{equation}

According to Theorem ~\ref{theorem:theoremKernel2}, $L''(3.9) \in Z^{M}_{2}(D^{\mathrm{Perf}}(X_{2}))$ and  it is obvious that $L''(3.9) $ is a deformation of $L'(3.7)$.

\end{proof}

Let $L$ denote the following complex, which is the Koszul resolution of $O_{X,x}/(fh,g)$ \footnote{We naively think this defines a nodal curve.},
\begin{equation} 
 0 \to O_{X,x} \xrightarrow{(g,-fh)^{\mathrm{T}}} O_{X,x}^{\oplus 2} \xrightarrow{(fh, g)} O_{X,x}.
\end{equation}
Under the isomorphism in Theorem \ref{theorem: Balmer theorem}(take j=0), $L$ decomposes into the direct sum of $\mu(Y)$ and $\mu(Z)$, 
\[
\begin{cases}
\begin{CD}
\mu(Y): 0 \to (O_{X,x})_{(f,g)}\xrightarrow{(g,-f)^{\mathrm{T}}} (O_{X,x})_{(f,g)}^{\oplus 2} \xrightarrow{(f, g)} (O_{X,x})_{(f,g)}, \\
\mu(Z): 0 \to (O_{X,x})_{(g,h)} \xrightarrow{(g,-h)^{\mathrm{T}}} (O_{X,x})_{(g,h)}^{\oplus 2} \xrightarrow{(h, g)} (O_{X,x})_{(g,h)}.
\end{CD}
\end{cases}
\]

\begin{theorem} \label{theorem: answerToNg}
The answer to Question \ ~\ref{question: NgRewrite} is positive. To be precise, in the same assumption of Theorem \ref{theorem: mainTheorem},
\begin{itemize}
\item Case 1: if $b_{1} \notin (f,g,h)$, then $\mu(Y') \in  Z^{M}_{2}(D^{\mathrm{Perf}}(X_{1}))$ and $\mu(Y')$ deforms to  second order in $Z^{M}_{2}(D^{\mathrm{Perf}}(X_{2}))$.\\

\item Case 2: if $b_{1} \in (f,g,h)$\footnote{In this case, in general, $\mu(Y')$ is not a Milnor K-theoretic so that it is not a deformation of $\mu(Y)$.},  
we can find an element $L \mathrm{(3.10)} \in Z^{M}_{2}(D^{\mathrm{Perf}}(X))$ satisfying $L= \mu(Y) + \mu(Z)$, where the curve $Z \subset X$ defined in Definition \ref{definition: definingCurveZ},  a first order  deformation $L' \mathrm{(3.7)}$ of $L\mathrm{(3.10)}$, in the sense of  Definition \ref{definition: deformation} such that 

(1). $ L' \mathrm{(3.7)} = \mu(Y') + \mu(Z') \in Z^{M}_{2}(D^{\mathrm{Perf}}(X_{1}))$, with $Z'$ a first order infinitesimal deformation of $Z$. So it is obvious that $L' \mathrm{(3.7)} \mid_{Y} = \mu(Y')$. 

(2). the first order deformation $L' \mathrm{(3.7)}$ extends to second order $L'' \mathrm{(3.9)} \in Z^{M}_{2}(D^{\mathrm{Perf}}(X_{2}))$.

(3). Considering $\mu(Z)$ as a first order deformation of itself, we can 
extend it to second order $\mu(Z)$(meaning we fix $Z$).

In conclusion, $(\mu(Y') + \mu(Z'))-\mu(Z)$ is a first order deformation of $\mu(Y)$ satisfying $((\mu(Y') + \mu(Z'))-\mu(Z))\mid_{Y}=\mu(Y')$ 
and we can deform it to second order.

\end{itemize}
\end{theorem}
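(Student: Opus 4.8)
The plan is to assemble Theorem \ref{theorem: answerToNg} directly from Theorem \ref{theorem: mainTheorem} together with the Balmer decompositions recorded just before it, organizing the argument by whether $b_{1}\in(f,g,h)$. In \textbf{Case 1} ($b_{1}\notin(f,g,h)$) I would simply invoke Lemma \ref{lemma: trivialdeform}, which already gives $\mu(Y')\in Z^{M}_{2}(D^{\mathrm{Perf}}(X_{1}))$ since $b_{1}$ is then a unit and $f_{1}=a_{1}/b_{1}\in O_{X,x}$; to extend to second order I would pick any lift $f+\varepsilon f_{1}+\varepsilon^{2}f_{2}$ with $f_{2}\in O_{X,x}$, observe that the $\partial^{2,-2}_{1}$-image of the associated Ang\'eniol--Lejeune-Jalabert diagram vanishes for the same reason as in Lemma \ref{lemma: trivialdeform} — the generator $h$ of the maximal ideal appears in the first Koszul matrix $M_{1}$ of $O_{X,x}/(f,g,h)$ — so the resulting Koszul complex over $O_{X_{2},y}$ lies in $Z^{M}_{2}(D^{\mathrm{Perf}}(X_{2}))$ and visibly restricts to $\mu(Y')$ under $f_{2}^{\ast}$. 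This is exactly Case 1 of Theorem \ref{theorem: mainTheorem}; the curve $Z$ plays no role.

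For \textbf{Case 2} ($b_{1}\in(f,g,h)$) I would first normalize as in the proof of Theorem \ref{theorem: mainTheorem}: writing $b_{1}=b_{f}f+b_{g}g+uh^{n}$ with $u$ a unit and reducing to $u=1$, $n=1$, one has $(f+\varepsilon f_{1},g)=(f+\varepsilon\tfrac{a_{1}}{h},g)$ in $O_{X,y}[\varepsilon]$, so it is harmless to take $f_{1}=a_{1}/h$. I would then introduce $Z:=\overline{\{z\}}$ with $z$ the point of $(h,g)$ as in Definition \ref{definition: definingCurveZ}, and the first order deformation $Z'$ generically cut out by $(h+\varepsilon\tfrac{a_{1}}{f},g)$. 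Setting $w_{1}=a_{1}$ in the complex $L'$ of (3.1) and applying Theorem \ref{theorem: Balmer theorem} with $j=1$, $i=-2$, the localizations of $L'$ at $(f,g)$ and at $(h,g)$ become, after clearing the units $h^{-1}\in(O_{X_{1},x})_{(f,g)}$ and $f^{-1}\in(O_{X_{1},x})_{(h,g)}$, precisely $\mu(Y')$ and $\mu(Z')$, so $L'=\mu(Y')+\mu(Z')$ in $\bigoplus_{y\in X^{(2)}}K_{0}(O_{X_{1},y}\ \mathrm{on}\ y)_{\mathbb{Q}}$; the same with $w_{1}=0$ gives $L=\mu(Y)+\mu(Z)$ for $L$ of (3.10). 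Now claim (1) is Theorem \ref{theorem: TheoremKernel1} applied to $L'$, and $L'\mid_{Y}=\mu(Y')$ is immediate because the component of $L'$ at the generic point $y$ of $Y$ is $\mu(Y')$; claim (2) is Theorem \ref{theorem:theoremKernel2}, since taking $w_{1}=a_{1},w_{2}=a_{2}$ in (3.5) produces $L''$ of (3.9), which is a Milnor K-theoretic cycle with $f_{2}^{\ast}(L'')=L'$; claim (3) is the trivial constant family $Z'=Z\subset X\subset X_{1}$, resp. $Z\subset X\subset X_{2}$. Subtracting, $(\mu(Y')+\mu(Z'))-\mu(Z)=L'-\mu(Z)$ deforms $L-\mu(Z)=\mu(Y)$ to first order, extends to second order via $L''-\mu(Z)$, and restricts to $\mu(Y')$ on $Y$ — exactly what Question \ref{question: NgRewrite} demands.

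The genuine mathematical content is already inside Theorems \ref{theorem: TheoremKernel1}, \ref{theorem:theoremKernel2} and \ref{theorem: mainTheorem}: the vanishing of $\partial^{2,-2}_{1}(\alpha+\beta)$ in $H^{3}_{x}(\Omega^{1}_{X/\mathbb{Q}})$, which makes $L'$ (and $L''$) honest Milnor K-theoretic cycles even though neither $\mu(Y')$ nor $\mu(Z')$ is one individually. So for the present theorem the hard part is not a new estimate but the bookkeeping: checking that the Balmer decomposition of Theorem \ref{theorem: Balmer theorem} is compatible with the restriction maps $f_{j}^{\ast}$ of Definition \ref{definition: deformation}, so that "$L'\mid_{Y}=\mu(Y')$" and "$L''$ is a deformation of $L'$" hold simultaneously in the correct groups; and confirming that the simplifications $g_{1}=0$, $u=1$, $n=1$ cost no generality, since in the general case one only needs a power of $h$ (times a unit) to appear in $M_{1}$, which is precisely what the computations in Theorem \ref{theorem: TheoremKernel1} and Lemma \ref{lemma: trivialdeform} use.
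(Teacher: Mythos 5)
Your proposal is correct and follows essentially the same route as the paper: Case 1 is Lemma \ref{lemma: trivialdeform} plus the mimicked second-order computation, and Case 2 is the normalization $b_{1}=b_{f}f+b_{g}g+uh^{n}$, the introduction of $Z$ from Definition \ref{definition: definingCurveZ}, and the Balmer decompositions $L=\mu(Y)+\mu(Z)$, $L'=\mu(Y')+\mu(Z')$, $L''$ certified by Theorems \ref{theorem: TheoremKernel1} and \ref{theorem:theoremKernel2} — which is precisely how the paper assembles Theorem \ref{theorem: answerToNg} from Theorem \ref{theorem: mainTheorem} and the discussion of the complex $L$ in (3.10). The only addition you make is to flag explicitly the compatibility of the Balmer decomposition with the restriction maps $f_{j}^{\ast}$, a bookkeeping point the paper leaves implicit.
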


\section{Acknowledgements}
\label{Acknowledge}
The author must record that the main ideas in this note are known to Mark Green and Phillip Griffiths \cite{GGtangentspace} and TingFai Ng \cite{Ng}(for the divisor case). This note is to reinterpret and generalize their ideas in a different language.

The author thanks Spencer Bloch for comments and questions on previous version and thanks
Christophe Soul\'e for comments on \cite{Y-3, Y-4}; thanks Shiu-Yuen Cheng and Congling Qiu for very helpful correspondence.

 %% \section{Appendix: reproof of Ng's theorem}
%% \label{reproof of Ng's theorem}
 %% I am thinking of re-proof Ng's result....

\end{document}